\def\@settitle{\begin{center}%
  \baselineskip14\p@\relax
  \normalfont\LARGE\bfseries
  \@title
  \ifx\@subtitle\@empty\else
     \\[1ex] 
     \normalsize\mdseries\@subtitle
  \fi
 \ifx\@didication\@empty\else
     \\[2ex] 
     \large\mdseries\it\@dedication
  \fi
  \end{center}%
}
\def\subtitle#1{\gdef\@subtitle{#1}}
\def\@subtitle{}
\def\dedication#1{\gdef\@dedication{#1}}
\def\@dedication{}
\renewcommand{\section}{\@startsection
{section}{1}{0mm}{5mm}{2mm}{\raggedright\bfseries}}
\newtheorem{mainthm}{Theorem} 
\newtheorem{maincor}[mainthm]{Corollary} 
\newtheorem{theorem}{Theorem}[section] 
\theoremstyle{definition}
\newtheorem{thm}[theorem]{Theorem}
\newtheorem{lem}[theorem]{Lemma}
\newtheorem{prop}[theorem]{Proposition}
\newtheorem{cor}[theorem]{Corollary}
\newtheorem{dfn}[theorem]{Definition}
\newtheorem*{Example*}{Example}
\newtheorem*{Claim*}{Claim}
\newtheorem*{Question*}{\it Question}
\begin{document}

\def\check{{\clubsuit}}
\def\Z{{\mathbb Z}}
\def\G{{\mathbb G}}
\def\C{{\mathbb C}}
\def\Q{{\mathbb Q}}
\def\R{{\mathbb R}}
\def\N{{\mathbb N}}
\def\bQ{\overline{\mathbb Q}}
\def\Gal{\mathrm{Gal}}
\def\Out{\mathrm{Out}}
\def\vru{\,\vrule\,}
\def\wf{{f}}
\def\ff{\mathfrak{f}}
\def\et{\text{\'et}}
\def\check{{\clubsuit}}
\def\boldzeta{{\boldsymbol \zeta}}
\def\nyoroto{{\rightsquigarrow}}
\newcommand{\pathto}[3]{#1\overset{#2}{\dashto} #3}
\newcommand{\pathtoD}[3]{#1\overset{#2}{-\dashto} #3}
\def\dashto{{\,\!\dasharrow\!\,}}
\def\ovec#1{\overrightarrow{#1}}
\def\Isom{\mathrm{Isom}}
\def\proP{{\text{pro-}p}}
\def\padic{{p\mathchar`-\mathrm{adic}}}
\def\scP{\mathscr{P}}
\def\scM{\mathscr{M}}
\def\scLi{{\mathscr{L}i}}
\newcommand{\Li}{\mathrm{Li}}
\newcommand{\cC}{\mathcal{C}}
\def\tilbchi{\tilde{\boldsymbol \chi}}
\def\tilchi{{\tilde{\chi}}}
\def\bkappa{{\boldsymbol \kappa}}
\def\lala{\la\!\la}
\def\rara{\ra\!\ra}
\def\ttx{{\mathtt{x}}}
\def\tty{{\mathtt{y}}}
\def\ttz{{\mathtt{z}}}
\def\kk{{\varkappa}}     
\def\la{{\langle}}
\def\ra{{\rangle}}
\def\fp{{\mathfrak{p}}}

\newcommand{\nc}{\newcommand}
\nc{\hra}{\hookrightarrow}
\nc{\ab}{\mathrm{ab}}
\nc{\id}{\mathrm{id}}
\nc{\defeq}{:=}
\nc{\sseq}{\subset}
\nc{\epi}{\twoheadrightarrow}
\nc{\isom}{\stackrel{\sim}{\to}}
\nc{\Ker}{\mathrm{ker}}
\nc{\kb}{\overline{k}}
\nc{\spec}{\mathrm{Spec}}
\nc{\aut}{\mathrm{Aut}}
\nc{\out}{\mathrm{Out}}
\nc{\im}{\mathrm{Im}}
\nc{\inn}{\mathrm{Inn}}
\nc{\Sr}{\mathfrak{S}_r}
\nc{\pri}{\mathrm{pr}_i}
\nc{\cc}{\mathbb{C}}
\nc{\nn}{\mathbb{N}}
\nc{\rr}{\mathbb{R}}
\nc{\zz}{\mathbb{Z}}
\nc{\rrp}{\rr_{>0}}
\nc{\rrnn}{\rr_{\ge0}}
\nc{\qq}{\mathbb{Q}}
\nc{\fl}{\mathbb{F}_l}
\nc{\qbar}{\overline{\qq}}
\nc{\po}{\mathbb{P}^1}
\nc{\St}{\mathfrak{S}_3}
\nc{\Sf}{\mathfrak{S}_4}
\nc{\mbn}{\mathcal{B}_n}
\nc{\mbf}{\mathcal{B}_4}
\nc{\mbt}{\mathcal{B}_3}
\nc{\mbnh}{\widehat{\mathcal{B}}_n}
\nc{\mpn}{\mathcal{P}_n}
\nc{\mpnh}{\widehat{\mathcal{P}}_n}
\nc{\mpth}{\widehat{\mathcal{P}}_3}
\nc{\sn}{\mathfrak{S}_n}
\nc{\st}{\mathfrak{S}_3}
\nc{\stab}{\mathrm{Stab}}
\nc{\bn}{B_n}
\nc{\pn}{P_n}
\nc{\bnh}{\widehat{B}_n}
\nc{\pnh}{\widehat{P}_n}
\nc{\pin}{\varpi_n}
\nc{\pinh}{\widehat{\pi}_n}
\nc{\mpin}{\pi_n}
\nc{\mpinh}{\widehat{\pi}_n}
\nc{\mpith}{\widehat{\pi}_3}
\nc{\mpifh}{\widehat{\pi}_4}
\nc{\cn}{C_n}
\nc{\cnh}{\widehat{C}_n}
\nc{\gn}{\Gamma_{0, [n]}}
\nc{\pgn}{\Gamma_{0, n}}
\nc{\sno}{\mathfrak{S}_{n+1}}
\nc{\gno}{\Gamma_{0, [n+1]}}
\nc{\gnoh}{\widehat{\Gamma}_{0, [n+1]}}
\nc{\pgno}{\Gamma_{0, n+1}}
\nc{\pgnoh}{\widehat{\Gamma}_{0, n+1}}
\nc{\pgtoh}{\widehat{\Gamma}_{0, 4}}
\nc{\pzfh}{\widehat{\Pi}_{0, 4}}
\nc{\mzno}{\mathcal{M}_{0, n+1}}
\nc{\phig}{\phi_{\Gamma}}
\nc{\phip}{\phi_{\mathcal{P}}}
\nc{\phis}{\phi_{\mathfrak{S}}}
\nc{\gt}{\widehat{\mathrm{GT}}}
\nc{\mbfh}{\widehat{\mathcal{B}}_4}
\nc{\mpfh}{\widehat{\mathcal{P}}_4}
\nc{\mbth}{\widehat{\mathcal{B}}_3}
\nc{\zhat}{\widehat{\zz}}
\nc{\co}{H_{\mathrm{cont}}}
\nc{\coz}{Z_{\mathrm{cont}}}
\nc{\cob}{B_{\mathrm{cont}}}
\nc{\frakS}{\mathfrak{S}}
\nc{\cB}{\mathcal{B}}
\nc{\cP}{\mathcal{P}}
\nc{\freeprod}{\,\rotatebox[origin = c]{180}{\mbox{\footnotesize$\Pi$}}\,}
\nc{\hookuparrow}{\rotatebox[origin = c]{90}{\mbox{$\hookrightarrow$}}}
\nc{\mathj}{\jmath}
\nc{\zh}{\widehat{\zz}}
\nc{\vertsim}{\reflectbox{\rotatebox{90}{$\sim$}}}

\title{The automorphism groups \\
of the profinite braid groups}

\author{Arata Minamide and Hiroaki Nakamura}

\subjclass[2010]
{14G32; 20F36, 20E18, 14H30, 14H10}

\address{Arata Minamide:
University of Nottingham,
School of Mathematical Sciences,
University Park
Nottingham NG7 2RD, United Kingdom;
Mathematical Institute,
University of Oxford,
Woodstock Road
Oxford OX2 6GG, United Kingdom}
\email{arata373de@gmail.com, minamide@kurims.kyoto-u.ac.jp} 

\address{Hiroaki Nakamura: 
Department of Mathematics, 
Graduate School of Science, 
Osaka University, 
Toyonaka, Osaka 560-0043, Japan}
\email{nakamura@math.sci.osaka-u.ac.jp}

\maketitle

\markboth{A.Minamide and H.Nakamura}
{The automorphism groups of the profinite braid groups}
\begin{abstract}
In this paper we determine the 
automorphism groups of the profinite braid groups
with four or more strings
in terms of the profinite Grothendieck-Teichm\"uller group.
\end{abstract}

\setcounter{tocdepth}{1}
\tableofcontents



\vspace{-1cm}

\section{Introduction} 

Let $\bn$ be the Artin braid group 
with $n(\ge 2)$ strings defined by 
generators 
$\sigma_1, \sigma_2, \dotso, \sigma_{n-1}$
and relations:
\begin{itemize}
\item \ $\sigma_i \sigma_{i+1}  \sigma_i 
\ = \ \sigma_{i+1}  \sigma_i  \sigma_{i+1}$
$\quad (i=1,\dots,n-1)$,
\item \ $\sigma_i \sigma_j \ = \ \sigma_j  \sigma_i \ \ \ 
(|i-j|\ge2)$.
\end{itemize}
In \cite{dg}, J. L. Dyer and E. K. Grossman studied 
the automorphism group
$\aut(B_n)$ and showed
$\out(B_n)\cong\Z/2\Z$ for $n\ge 3$.
In this paper, we study the continuous automorphisms of the
profinite completion $\bnh$ of $B_n$. We prove

\begin{mainthm} Let $n\ge 4$.
There exists a natural isomorphism 
$$
\out(\widehat B_n) \ \cong \ \gt\times (1+n(n-1)\zh)^{\times},
$$
where 
$\gt$ is the profinite Grothendieck-Teichm\"uller group 
introduced by V.\,Drinfeld \cite{Dr}, Y.\,Ihara \cite{I90}-\cite{im}
and $(1+n(n-1)\zh)^{\times}$ is the kernel of the natural 
projection $\zh^\times\to (\Z/n(n-1)\Z)^\times$.
\end{mainthm}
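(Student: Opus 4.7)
The plan is to construct the isomorphism explicitly via mutually inverse maps. First I would build a natural homomorphism $\Phi : \gt \times (1+n(n-1)\zh)^\times \to \out(\bnh)$, then show it is injective, and finally show it is surjective.

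For the construction of $\Phi$, the $\gt$-factor acts via the classical Drinfeld-Ihara embedding of $\gt$ into the outer automorphism group of the profinite étale fundamental group of $\mathcal{M}_{0,[n+1]}$, the moduli space of genus-zero curves with $n+1$ unordered marked points, whose fundamental group is closely tied to $\bnh$ modulo its center. One lifts this action through the central extension $1 \to Z(\bnh) \to \bnh \to \bnh/Z(\bnh) \to 1$. For the second factor, given $\lambda \in (1+n(n-1)\zh)^\times$, one defines an automorphism acting as multiplication by $\lambda$ on the center $Z(\bnh) \cong \zh$ (generated by $\Delta_n^2$) and compatibly on the quotient; the congruence $\lambda \equiv 1 \pmod{n(n-1)}$ is exactly what is needed for consistency with the abelianization $\bnh^{\ab} = \zh$, because $\Delta_n^2$ has abelianized length $n(n-1)$. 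Injectivity of $\Phi$ follows by observing that $\gt$ acts faithfully on $\bnh/Z(\bnh)$ while the central-twist factor acts faithfully on $Z(\bnh)$, so any element of the kernel is trivial in both coordinates.

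Surjectivity, which I expect to be the main obstacle, proceeds as follows. Given $\alpha \in \aut(\bnh)$, first establish that $\pnh$ and $Z(\bnh)$ are characteristic in $\bnh$, a profinite analogue of Dyer-Grossman: $\pnh$ is recognized as the kernel of the unique surjection $\bnh \to \sn$, and $Z(\bnh)$ via intrinsic centralizer and torsion considerations. Then $\alpha$ induces an automorphism $\bar\alpha$ of $\bnh/Z(\bnh)$; by the anabelian rigidity of this moduli-space fundamental group, $\bar\alpha$ is, modulo inner automorphisms, the image of a unique $\sigma \in \gt$. Adjusting $\alpha$ by $\Phi(\sigma, 1)^{-1}$, we may assume $\alpha$ acts trivially on $\bnh/Z(\bnh)$ and by some $\mu \in \zh^\times$ on the center; comparison with the abelianization then forces $\mu \in (1+n(n-1)\zh)^\times$, identifying $\alpha$ with $\Phi(1,\mu)$.

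The hard part will be establishing the anabelian rigidity needed in the surjectivity step: that every continuous outer automorphism of $\bnh/Z(\bnh)$ comes from $\gt$. This typically requires a careful analysis of tangential basepoints at boundary divisors of $\mathcal{M}_{0,[n+1]}$, conjugacy classes of Dehn-twist elements, and weight filtrations. A secondary difficulty is proving the characteristicity of $\pnh$ and $Z(\bnh)$ in the profinite category, since the discrete torsion arguments of Dyer-Grossman must be replaced by profinite/cohomological ones that remain valid for continuous automorphisms.
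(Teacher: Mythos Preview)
Your overall architecture matches the paper's: reduce via the central extension $1 \to \cnh \to \bnh \to \mbnh \to 1$, establish $\out(\mbnh) \cong \gt$ (this is the paper's Theorem~B), and identify the kernel of $\aut(\bnh) \to \aut(\mbnh)$ with $(1+n(n-1)\zh)^\times$ through the central twists $\sigma_i \mapsto \sigma_i \zeta_n^e$. Two points deserve comment.

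\textbf{A genuine gap at $n=4$.} Your claim that $\pnh$ is ``recognized as the kernel of the unique surjection $\bnh \to \sn$'' fails precisely when $n=4$. Artin's classification gives three inequivalent surjections $B_4 \epi \frakS_4$, and two of them have kernel different from $P_4$ (e.g.\ $\sigma_1^2 \in P_4$ is sent to $(13)(24)\neq 1$). So the Dyer--Grossman style argument does not go through for $n=4$; indeed the paper points out that the original Dyer--Grossman treatment of this case was itself incomplete. The paper devotes an entire section to repairing this: it passes to the Cardano--Ferrari homomorphism $b_{43}:\mbfh \epi \mbth$ (with $\bar\sigma_1,\bar\sigma_3 \mapsto \bar\sigma_1$, $\bar\sigma_2 \mapsto \bar\sigma_2$), shows that $\ker(b_{43})\cong(\Z/2\Z)\freeprod(\Z/2\Z)\freeprod(\Z/2\Z)$ is characteristic in $\mbfh$ via a torsion argument (Herfort--Ribes) and a normal-subgroup result for free profinite groups, and finally recovers $\mpfh$ as the kernel of the conjugation action of $\mbfh$ on the abelianization of a further characteristic subgroup $\pzfh$. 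Without a substitute for this, your proof is incomplete at $n=4$.

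\textbf{How the rigidity step is actually done.} You flag the isomorphism $\out(\mbnh)\cong\gt$ as the hard part and propose tangential basepoints, Dehn-twist conjugacy classes, and weight filtrations. The paper's route is different and more structural: once $\mpnh$ is characteristic in $\mbnh$ and every automorphism of $\mbnh$ is shown to induce an \emph{inner} automorphism of $\sn$, a short group-theoretic lemma identifies $\out(\mbnh)$ with the centralizer of the image of $\sn$ inside $\out(\mpnh)\cong\out(\pgnoh)$. The anabelian input is then the Hoshi--Minamide--Mochizuki theorem $\out(\pgnoh)\cong\gt\times\sno$, in which the centralizer of $\iota_n(\sn)\subset\sno$ is visibly $\gt$. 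So the deep rigidity statement concerns the \emph{pure} sphere-braid group $\pgnoh$, not $\mbnh$ directly, and the passage between them is the centralizer computation --- not a fresh boundary analysis of $\mbnh$.
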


It is well known that the center $\cnh$ of $\bnh$ is (topologically)
generated by 
$
\zeta_n \defeq (\sigma_1\sigma_2 \dotsm \sigma_{n-1})^n
$ 
and is isomorphic to $\zh$. 
Write
\[
\mbnh \ \defeq \ \bnh/\cnh. 
\]
Since $\cnh$ is a characteristic subgroup of $\bnh$, there is 
induced the natural homomorphism $\gt \to \out(\mbnh)$.
The key fact for the proof of Theorem A is 
the following isomorphism theorem.

\begin{mainthm}[Theorem \ref{gtmbnh}]
Let $n\ge4$. Then, it holds that $\gt \isom \out(\mbnh)$.
\end{mainthm}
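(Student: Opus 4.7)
The plan is to identify $\mbnh$ with a profinite mapping class group on which the action of $\gt$ is already understood, and then transport the isomorphism across that identification. Concretely, I would first recall the classical isomorphism $\mbn\cong\gno$, which comes from viewing $\bn$ as the mapping class group of an $n$-punctured disk with $\cn$ generated by the Dehn twist along the boundary; passing to profinite completions yields $\mbnh\cong\gnoh$. Since $\gnoh$ is (essentially) the geometric orbifold \'etale fundamental group of $\mathcal{M}_{0,n+1}/\mathfrak{S}_n$ over $\bQ$, the canonical outer $\gt$-action on fundamental groups of genus-zero moduli spaces produces the map $\gt\to\out(\mbnh)$. Injectivity of this map is comparatively easy: restricting along a forgetful morphism $\mathcal{M}_{0,n+1}\to\mathcal{M}_{0,4}$ exhibits $\widehat{F}_2\cong\pi_1^{\et}(\po_{\bQ}-\{0,1,\infty\})$ as a subquotient of $\mbnh$, and faithfulness of $\gt$ on this $\widehat{F}_2$ is built into the definition of $\gt$.

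Surjectivity is the main content. Given $\phi\in\out(\mbnh)$, my approach would be three-pronged. First, use the canonical quotient $\mbnh\twoheadrightarrow\mathfrak{S}_n$ (with kernel the image of the pure braid group $\widehat{P}_n/\widehat{C}_n$) to separate $\phi$ into a permutation part---controlled by rigidity considerations on the symmetric-group quotient together with the specific way its generators arise from the $\sigma_i$---and a pure part. Second, show that $\phi$ preserves the conjugacy classes of the cuspidal (inertia) subgroups of $\mbnh$ attached to the boundary divisors of the Deligne--Mumford compactification $\overline{\mathcal{M}}_{0,n+1}$; this is a combinatorial-anabelian rigidity statement in the spirit of Nakamura--Tsunogai and Hoshi--Mochizuki. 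Third, using this combinatorial rigidity, compare $\phi$ with the $\gt$-action on the $\mathcal{M}_{0,4}$- and $\mathcal{M}_{0,5}$-subquotients (both available precisely because $n\ge 4$) to extract a canonical $\sigma\in\gt$ realising $\phi$: the hexagon relation is detected on the $\mathcal{M}_{0,4}$-part and the pentagon on the $\mathcal{M}_{0,5}$-part.

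The main obstacle is the second step: the group-theoretic recognition of cuspidal subgroups inside $\mbnh$, and the verification that an arbitrary continuous outer automorphism preserves the induced combinatorial (pants-decomposition) structure. This is the combinatorial-anabelian heart of the theorem, and is precisely where the hypothesis $n\ge 4$ is essential---for smaller $n$ there are simply not enough independent inertia subgroups to pin down a continuous outer automorphism. I would expect the proof to invoke (or adapt) the combinatorial-anabelian machinery referenced in the paper's introduction rather than develop this rigidity from scratch.
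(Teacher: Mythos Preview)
Your proposal contains a concrete error at the very first step: the claimed isomorphism $\mbn\cong\gno$ is false. Capping the boundary of the $n$-punctured disk produces a sphere with $n+1$ marked points in which the \emph{new} point (coming from the boundary) is distinguished; the resulting map $\Phi_n:\bn\to\gno$ (with kernel $\cn$) therefore identifies $\mbn$ only with the index-$(n+1)$ subgroup $\widehat{\gamma}_{n+1}^{\,-1}(\iota_n(\sn))\subset\gno$, i.e., the stabilizer of the $(n+1)$-st label. What \emph{is} true is the isomorphism of the pure parts, $\mpn\cong\pgno$, and this is precisely the piece the paper exploits.

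The paper's route is therefore more indirect than a straight transport of structure. First one shows (Propositions~\ref{char} and~\ref{charaP4B4}) that $\mpnh$ is characteristic in $\mbnh$; this is nontrivial at $n=4$ because of Artin's non-standard surjections $B_4\twoheadrightarrow\mathfrak{S}_4$ and occupies all of \S3. Then a short group-theoretic lemma (Lemma~\ref{dpg}), together with Proposition~\ref{out}\,(i) showing that the induced map $\out(\mbnh)\to\out(\sn)$ is trivial, identifies $\out(\mbnh)$ with the centralizer $Z_{\out(\mpnh)}(\sn)$. Via $\mpnh\cong\pgnoh$ this centralizer sits inside $\out(\pgnoh)$, and the Hoshi--Minamide--Mochizuki result $\out(\pgnoh)\cong\gt\times\sno$ (Theorem~\ref{hmm}) is invoked as a black box: since $\iota_n(\sn)$ is self-centralizing in $\sno$, the centralizer is exactly $\gt$. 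So the combinatorial-anabelian rigidity you anticipate is indeed present, but it enters only through the cited theorem on $\out(\pgnoh)$; the paper's own contribution is the reduction of $\out(\mbnh)$ to a centralizer inside that known group, and your proposal does not set that reduction up correctly.
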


Our proofs of Theorems A and B rely on preceding works by many
authors on the Grothendieck-Teichm\"uller group $\gt$
and the profinite completion $\widehat\Gamma_{0,n}$ of the
mapping class group $\Gamma_{0,n}$ of the sphere 
with $n$ marked points (cf.\,\cite{im}, \cite{LS1}, \cite{LS2}, \cite{C12}).
The permutation of labels defines a natural inclusion
of the symmetric group of degree $n$:
$\mathfrak{S}_n\hookrightarrow \Out(\widehat\Gamma_{0,n})$,
whose image commutes with the standard
action of 
$\gt$ 
on $\widehat\Gamma_{0,n}$ (\cite{im}).
D.Harbater and L.Schneps \cite{HS} remarkably showed that 
when $n\ge 5$, $\gt$ is characterized as a ``special'' subgroup of 
the centralizer of $\mathfrak{S}_n$ in $\Out(\widehat\Gamma_{0,n})$.
In a recent work \cite{hmm}, 
this result has been improved by showing 
that the focused centralizer is indeed {\it full} 
as large as possible in $\Out(\widehat\Gamma_{0,n})$.
In particular,

\begin{thm}[Hoshi-Minamide-Mochizuki \cite{hmm} Corollary C]
\label{hmm}
There is a natural 
isomorphism of profinite groups
\[
\gt \times \sno \ \isom \ \out(\pgnoh)
\]
for every integer $n\ge 4$.
\hfill $\square$ 
\end{thm}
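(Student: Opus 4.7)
The plan is to produce a natural homomorphism
\[
\Phi \colon \gt \times \sno \longrightarrow \out(\pgnoh)
\]
and prove it is an isomorphism. The two factors correspond to two classical sources of outer automorphisms: the Drinfeld--Ihara action of $\gt$ on the profinite fundamental group of the moduli space $\mzno$, which is naturally identified with $\pgnoh$, and the permutation action of $\sno$ on marked points, which arises via the short exact sequence
\[
1 \longrightarrow \pgnoh \longrightarrow \gnoh \longrightarrow \sno \longrightarrow 1.
\]
A classical compatibility, already implicit in Ihara--Matsumoto, asserts that the images of $\gt$ and $\sno$ in $\out(\pgnoh)$ commute, so these maps assemble into $\Phi$.

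For injectivity I would argue the two factors separately and then check triviality of intersection. Faithfulness of $\gt \to \out(\pgnoh)$ is classical: already the restriction to the inertia subgroup at a boundary divisor of $\overline{\mzno}$ factors through the faithful $\gt$-action on $\hat{\pi}_1(\po \setminus \{0,1,\infty\})$. Faithfulness of $\sno \to \out(\pgnoh)$ holds because a nontrivial permutation of marked points cannot be inner, as seen for instance from the induced permutation of inertia conjugacy classes. Triviality of $\gt \cap \sno$ inside $\out(\pgnoh)$ follows from the same remark: $\gt$ stabilizes each such inertia class, whereas nontrivial elements of $\sno$ do not.

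The hard part will be surjectivity. Given $\varphi \in \out(\pgnoh)$, the first step is a combinatorial-anabelian input: $\varphi$ must permute the set of conjugacy classes of boundary-divisor inertia subgroups, so after multiplying by a suitable element of $\sno$ one may assume $\varphi$ centralizes $\sno$. Harbater--Schneps proved that $\gt$ embeds into this centralizer as a ``special'' subgroup; surjectivity now demands the much stronger fullness statement that $\gt$ \emph{equals} the full centralizer of $\sno$ in $\out(\pgnoh)$. This fullness is where essentially all the difficulty lies. I would attempt it by showing that any $\sno$-equivariant outer automorphism of $\pgnoh$ respects the profinite Dehn twist structure at every boundary stratum, so that it is pinned down on the sub-moduli spaces $\mathcal{M}_{0,4}$ and $\mathcal{M}_{0,5}$; the pentagon and hexagon relations on $\mathcal{M}_{0,5}$ then force the automorphism to satisfy Drinfeld's defining equations for $\gt$. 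This final step is the main obstacle.
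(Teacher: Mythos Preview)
The paper does not prove this theorem: it is quoted from Hoshi--Minamide--Mochizuki as a black-box input, closed with a terminal $\square$, and the remainder of the paper \emph{uses} it to derive Theorems~A and~B. There is no argument in the paper to compare your proposal against.

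Your sketch correctly identifies the architecture --- assemble $\Phi$ from the commuting $\gt$- and $\sno$-actions, handle injectivity via inertia considerations, and isolate surjectivity as the hard part --- and you are candid that the ``fullness'' step is an obstacle you have not resolved. As written, then, the proposal is a plan with a gap precisely at the decisive point, not a proof.

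For orientation on that gap: the route you propose for surjectivity (reduce to $\mathcal{M}_{0,4}$ and $\mathcal{M}_{0,5}$, then force Drinfeld's pentagon/hexagon relations) is close in spirit to Harbater--Schneps, but their argument only realizes $\gt$ as a \emph{special} subgroup of the centralizer of $\sno$, not the full centralizer --- the paper explicitly flags this distinction. The Hoshi--Minamide--Mochizuki proof closes the gap by a different mechanism: Mochizuki's combinatorial anabelian geometry (group-theoreticity of fiber subgroups and numerical invariants of configuration-space groups) is used to show that an arbitrary automorphism of $\pgnoh$ already preserves the relevant inertial/fiber structure, bypassing a direct verification of the $\gt$ identities. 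In particular, your preliminary step --- that any $\varphi$ permutes the boundary-inertia conjugacy classes --- is itself nontrivial in the profinite setting and is one of the outputs of that machinery rather than an elementary input.
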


Theorems A and B will be derived by translating
the ingredient of Theorem \ref{hmm} for $\Out(\pgnoh)$
into the language of $\Out(\mbnh)$ and $\Out(\bnh)$. 
Arguments given by Dyer-Grossman \cite{dg} for discrete
braid groups generically guide us also in profinite context. 
However, for the case $n=4$, we elaborate a different treatment in \S 3 
due to the existence of non-standard surjections $B_4\epi\frakS_4$ found in E. Artin's classic \cite{a}.
Our argument in \S 3 looks at the ``Cardano-Ferrari''
homomorphism $B_4\epi B_3$ which has close relations with
the universal monodromy representation in once-punctured
elliptic curves. Noting that $\cB_4$ is isomorphic to
the mapping class group $\Gamma_{1,2}$ of a topological torus
with two marked points, we obtain from Theorem B
the following remarkable

\begin{maincor}
There is a natural isomorphism $\gt\isom\out(\widehat\Gamma_{1,2})$.
\qed
\end{maincor}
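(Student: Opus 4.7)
The plan is to reduce the corollary to Theorem B via the classical discrete isomorphism $\mbf \cong \Gamma_{1,2}$ flagged in the paragraph preceding the statement. Here $\mbf = B_4/C_4$ is the center-quotient of the four-strand Artin braid group and $\Gamma_{1,2}$ is the mapping class group of the twice-marked torus. The identification between them is a well-known piece of low-genus surface topology, visible, for instance, via the hyperelliptic involution on the torus: its quotient is a sphere with four branch points, and the induced map between mapping class groups identifies $\Gamma_{1,2}$ with a group naturally related to $\mbf$ (up to the standard bookkeeping regarding ordered vs.\ unordered marked points).

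Granting this, the derivation is essentially formal. First I would transport the discrete isomorphism $\mbf \isom \Gamma_{1,2}$ to profinite completions, obtaining an isomorphism $\mbfh \isom \widehat{\Gamma}_{1,2}$ of profinite groups from the functoriality of profinite completion (and residual finiteness of both groups). Second, any isomorphism of profinite groups induces an isomorphism on continuous automorphism groups, and preserves inner automorphisms, so we obtain $\out(\mbfh) \isom \out(\widehat{\Gamma}_{1,2})$. Finally, specialising Theorem B to $n = 4$ gives $\gt \isom \out(\mbfh)$, and composing with the previous isomorphism yields the desired $\gt \isom \out(\widehat{\Gamma}_{1,2})$.

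The main obstacle, such as it is, lies entirely in the discrete identification $\mbf \cong \Gamma_{1,2}$: one must pin down conventions for the marked points (ordered versus unordered) and verify that the chosen generators and relations on the two sides match. Once this is in hand, no additional input beyond the machinery developed for Theorem B is required, so the corollary emerges as an essentially immediate translation of Theorem B from the braid-theoretic language to the mapping-class-group language.
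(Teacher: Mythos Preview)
Your proposal is correct and matches the paper's own reasoning exactly: the paper simply notes (in the paragraph preceding the corollary) that $\cB_4 \cong \Gamma_{1,2}$ and invokes Theorem~B at $n=4$, with no further argument given beyond the \qed. The only thing you add is the explicit mention of passing to profinite completions and the caveat about conventions, both of which are appropriate elaborations of what the paper leaves implicit.
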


{\it Acknowledgement}:
The first author would like to thank Prof.\,Shinichi Mochizuki for
helpful discussions and warm encouragements.
During preparation of this manuscript, the authors learnt that
Yuichiro Hoshi and Seidai Yasuda also had discussions on topics
including a similar phase to this paper.
This work was supported by the Research Institute for Mathematical Sciences, 
an International Joint Usage/Research Center located in Kyoto University.
Work on this paper was partially supported by EPSRC programme grant ``Symmetries and Correspondences''
EP/M024830.

\section{Generalities on braid groups}

We begin with recalling basic facts on braid groups (cf.\,e.g.,\,\cite{kt}).
Let $n\ge3$ be an integer.
The pure braid group $\pn$ is the kernel of the epimorphism
\begin{align*}
\varpi_n : \bn & \ \epi \ \sn \\
\sigma_i & \ \mapsto \ (i, i+1)
\qquad (i=1,\dots,n-1).
\end{align*}
The center $C_n$
of $P_n$ coincides with the center of $B_n$ which is a free
cyclic group generated by 
$$
\zeta_n \ \defeq \ (\sigma_1 \sigma_2 \dotsm \sigma_{n-1})^n.
$$
Write $\cP_n:=P_n/C_n$ and $\cB_n:=B_n/C_n$.
The above $\varpi_n$ factors through $\pi_n:\cB_n\epi\frakS_n$
and there arise the following exact sequences of finitely 
generated groups:
\begin{gather}
\label{exactSn}
\begin{CD}
1 @>>> \mpn @>>> \mbn @>\pi_n>> \sn @>>> 1,
\end{CD}
\\
\label{exactBn}
\begin{CD}
1 @>>> \cn @>>> \bn @>>> \mbn @>>> 1,
\end{CD}
\\
\label{exactPn}
\begin{CD}
1 @>>> \cn @>>> \pn @>>> \mpn @>>> 1.
\end{CD}
\end{gather}
We introduce the {\it mapping class group of the $n$-times punctured sphere} $\gn$
to be  the group generated by 
$\bar\sigma_1, \bar\sigma_2, \dotso, \bar\sigma_{n-1}$
with the relations
\begin{itemize}
\item \ $\bar\sigma_i  \bar\sigma_{i+1}  \bar\sigma_i \ = \ 
\bar\sigma_{i+1}  \bar\sigma_i \bar\sigma_{i+1} 
\quad (i=1,\dots,n-1)$, 
\item \ $\bar\sigma_i \bar\sigma_j \ = \ \bar\sigma_j 
\bar\sigma_i \ \ \ (|i-j|\ge2)$, 
\item \ $\bar\sigma_1 \dotsm \bar\sigma_{n-2}\, \bar\sigma_{n-1}^2  
\bar\sigma_{n-2} \dotsm \bar\sigma_1 \ = \ 1$,
\item \ $(\bar\sigma_1 \bar\sigma_2 \dotsm \bar\sigma_{n-1})^n \ = \ 1$.
\end{itemize}
Observe that there is a natural epimorphism 
\begin{align} \label{Psi_n}
\Psi_n : B_n & \ \epi \ \gn \\
\sigma_i & \ \mapsto  \ \bar\sigma_i \qquad (i=1,\dots,n-1) \notag
\end{align}
which factors through $\mbn=B_n/C_n$.
We also write $\pgn$ for the {\it pure mapping class group of the $n$-times punctured sphere}
which is by definition the kernel of the epimorphism
\begin{align}
\gamma_n : \gn & \ \epi \ \sn \\
\bar\sigma_i & \ \mapsto  \ (i, i+1)\qquad (i=1,\dots,n-1)
\notag
\end{align}
fitting in the exact sequence
\begin{gather}
\label{exactGn}
\begin{CD}
1 @>>> \pgn @>>> \gn @>>> \sn @>>> 1.
\end{CD}
\end{gather}
In this paper, besides the above epimorphism $\Psi_n$ (\ref{Psi_n}), 
another shifted morphism
\begin{align}
\Phi_n : \bn & \ \to \ \gno \\
\sigma_i & \ \mapsto  \ \bar\sigma_i 
\qquad (i=1,\dots,n-1)  \notag
\end{align}
plays an important role, whose kernel is known to coincide with $\cn$
(\cite[\S 9.2-3]{fm}).
The homomorphism $\Phi_n$ induces the
following commutative diagram of groups 
\begin{equation}
\label{discreteCD}
\vcenter{
\xymatrix{
 1 \ar[r] &\mpn \ar[r] \ar[d]^{\vertsim}&\mbn \ar[r]^{\mpin}
 \ar@{^{(}->}[d] & \sn \ar[r] \ar@{^{(}->}[d]^{\iota_n}&1  \\
1 \ar[r] &\pgno \ar[r]  &\gno \ar[r]^{{{\gamma}_{n+1}}} &\sno\ar[r]   &1,
}
}
\end{equation}
where the horizontal sequences are exact;
the left-hand (resp. middle; right-hand) vertical arrow is the isomorphism 
(resp. the injection; the natural injection which 
trivially extends each permutation
of $\{1, 2, \dotso, n\}$ to that of $\{1, 2, \dotso, n+1\}$) 
induced from $\Phi_n$.


It is well known that the profinite completion functor preserves 
the (injectivity of the) kernel part of the exact sequences
(\ref{exactSn})-(\ref{exactPn}) and 
(\ref{exactGn}) respectively.
If $Z(G)$ denotes the center of a profinite group $G$, then
\begin{equation}
\label{various-centers}
\begin{cases}
&Z(\mpn) = Z(\mbn) = Z(\mpnh) = Z(\mbnh) = \{1\}, \\
&\cnh = Z(\pnh) = Z(\bnh) \ (\cong \zh).
\end{cases}
\end{equation}
hold (cf.\,e.g.,\,\cite[\S 1.2-1.3]{n}).

\begin{dfn}
\label{moduli}
\normalfont
Let $n\ge3$ be an integer.
We shall write $(\ast)$ for the commutative diagram of profinite groups
\begin{equation*}
\vcenter{
\xymatrix{
 1 \ar[r] &\mpnh \ar[r] \ar[d]^{\vertsim}&\mbnh \ar[r]^{\mpinh}\ar@{^{(}->}[d]& \sn\ar[r] \ar@{^{(}->}[d]^{\iota_n}&1  \\
1 \ar[r] &\pgnoh \ar[r]  &\gnoh \ar[r]^{{\widehat{\gamma}_{n+1}}} &\sno\ar[r]   &1
}
}
\tag{$\ast$}
\end{equation*}
%
which is obtained as the profinite completion of
(\ref{discreteCD}).
Note that the horizontal sequences are exact as remarked as above.
\end{dfn}


\begin{prop}
\label{char}
Suppose that $n \neq 4$, $n\ge3$.
Then every epimorphism $\mbnh \epi \sn$ has kernel $\mpnh$.
In particular, $\mpnh$ is a characteristic subgroup of $\mbnh$.
\end{prop}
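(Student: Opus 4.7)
My plan is to reduce the profinite statement to the classical discrete result of E.\,Artin~\cite{a}: for $n\ge 3$ with $n\ne 4$, every surjection $B_n\epi\sn$ has kernel exactly $P_n$. Granted that, the ``profinite upgrade'' will be a routine density argument making essential use of the exactness of the profinite completion of (\ref{exactSn}) recorded after Definition~\ref{moduli}.

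Concretely, I would fix a continuous epimorphism $\phi:\mbnh\epi\sn$ and first look at its restriction along the dense inclusion $\mbn\hookrightarrow\mbnh$ (injective by residual finiteness of $\mbn$). Because $\sn$ is finite, the image of $\mbn$ under $\phi$ remains surjective, yielding an epimorphism $\mbn\epi\sn$; pulling it back to $B_n$ and applying Artin's theorem (here using $n\ne 4$) identifies its kernel with $P_n$. Since $C_n\subseteq P_n$, this shows that $\ker(\phi|_{\mbn})=\mpn$.

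Next I would upgrade this to the profinite level. The kernel $K:=\ker(\phi)$ is open of index $n!$ in $\mbnh$, hence equals the closure of $K\cap\mbn=\mpn$ (any open subgroup of a profinite group equals the closure of its intersection with a dense subgroup, as is seen by choosing small neighbourhoods of each of its points). The quoted exactness of the profinite completion of (\ref{exactSn}) identifies $\overline{\mpn}\subset\mbnh$ with $\mpnh$, so $K=\mpnh$. For the ``in particular'' clause, given $\alpha\in\aut(\mbnh)$, the composite $\mpinh\circ\alpha$ is a continuous epimorphism $\mbnh\epi\sn$ whose kernel is $\alpha^{-1}(\mpnh)$; the main statement therefore forces $\alpha^{-1}(\mpnh)=\mpnh$, i.e. $\mpnh$ is characteristic in $\mbnh$.

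The substantive obstacle is buried in Artin's theorem, which fails precisely at $n=4$ because of the exotic surjections $B_4\epi\frakS_4$ mentioned in the introduction; this is exactly why \S 3 must treat that case separately. The profinite step itself is formal and should cause no difficulty once the discrete input and the exact sequences of Definition~\ref{moduli} are in hand.
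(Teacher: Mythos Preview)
Your proposal is correct and follows essentially the same approach as the paper: reduce to Artin's classical theorem for the discrete case, then upgrade to the profinite setting via a density argument using residual finiteness of $\mbn$. The paper packages the profinite step as a separate general lemma (Lemma~\ref{ker}) rather than arguing inline, and it makes explicit that for $n=6$ the two Artin classes differ only by an outer automorphism of $\mathfrak{S}_6$ and hence share the kernel $P_6$, but otherwise the arguments coincide.
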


\begin{proof}
E.Artin (\cite[Theorem 1]{a}) classified all surjective homomorphisms
$B_n\epi \frakS_n$ up to equivalence by conjugation in $\frakS_n$: 
When $n\ne 4,6$, there is a unique equivalence class and when $n=6$
there are two classes mutually equivalent by a nontrivial
outer automorphism of $\frakS_6$. This proves the assertion
for discrete braid groups. Lemma \ref{ker} below
with the residual finiteness of $\mbn$ settles 
the assertion for the profinite braid groups.
\end{proof}

\begin{lem}
\label{ker}
Let $G$ be a residually finite group, $N$ a normal subgroup of $G$ 
with finite quotient $Q \defeq G/N$.
Suppose that every epimorphism $G \epi Q$ has the same kernel $N$.
Then, every epimorphism $\widehat{G} \epi Q$ has the same 
kernel $\widehat{N}$.
\end{lem}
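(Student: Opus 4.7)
The strategy is to pull an arbitrary continuous epimorphism $f : \widehat G \epi Q$ back to $G$ along the canonical embedding (available because $G$ is residually finite), apply the discrete hypothesis, and then finish by comparing indices.

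First I would show that the restriction $f|_G : G \to Q$ is already surjective: the image of $G$ in $\widehat G$ is dense, so $f(G)$ is dense in $Q$; but $Q$ is finite and discrete, hence $f(G) = Q$. So $f|_G$ is an epimorphism of $G$ onto $Q$, and by the standing hypothesis its kernel is exactly $N$. In particular $f$ kills $N$, and because $\ker(f)$ is closed in $\widehat G$ (as $Q$ is Hausdorff), it contains the closure $\widehat N$ of $N$ in $\widehat G$.

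For the reverse inclusion I would argue by a finite-index count. The epimorphism $G \epi Q$ extends by the universal property of profinite completion to a continuous epimorphism $\widehat G \epi Q$, whose (open, closed) kernel contains $N$ and hence $\widehat N$; since conversely $\widehat G/\widehat N$ is a profinite group surjecting onto $Q$ and containing the dense image of $G/N = Q$, its order is at most $|Q|$, so $[\widehat G : \widehat N] = |Q|$. Combined with $[\widehat G : \ker(f)] = |Q|$, the inclusion $\widehat N \subseteq \ker(f)$ forces equality.

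The only mildly delicate point is the index equality $[\widehat G : \widehat N] = [G:N]$; once that standard ingredient is in place, the rest of the argument is a formal chase. I would therefore expect no serious obstacle — the content of the lemma is really just the observation that a continuous epimorphism from $\widehat G$ to a finite group is determined on the dense subgroup $G$, so the discrete uniqueness of kernels immediately propagates.
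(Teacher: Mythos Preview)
Your proof is correct and follows essentially the same route as the paper's: restrict the given epimorphism $\widehat G\epi Q$ to the dense subgroup $G$, use finiteness of $Q$ to get surjectivity, apply the discrete hypothesis to identify $\ker(f)\cap G=N$, and then conclude $\ker(f)=\widehat N$. The only cosmetic difference is that the paper finishes by invoking the standard bijection between finite-index subgroups of $G$ and open subgroups of $\widehat G$ (so that $\ker(f)$, being open, equals the closure of $\ker(f)\cap G=N$), whereas you unpack that same fact as the index equality $[\widehat G:\widehat N]=[G:N]$.
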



\begin{proof}
Note first that, by one-to-one correspondence between 
the finite index subgroups of $G$ and 
the open subgroups of $\widehat{G}$, 
the image of the monomorphism $\widehat{N} \to \widehat{G}$ 
coincides with the closure of $N$ in $\widehat{G}$.
Let $p : \widehat{G} \epi Q$ be a given epimorphism.
Then, 
by \cite[Proposition 3.2.2 (a)]{rz}, 
the closure of $H \defeq \Ker(p) \cap G$ in $\widehat{G}$ 
coincides with $\Ker(p)$.
Consider the composite:
\[
\varphi : G \ \epi \ G/H \ \isom \ \widehat{G}/\Ker(p) \ \isom \ Q,
\]
where the first arrow is the projection,
the second arrow is the isomorphism induced from the associated morphism $G \hra \widehat{G}$ (\cite[Proposition 3.2.2 (d)]{rz}) and
the third arrow is the isomorphism induced by $p$.
{}From the assumption, $\varphi$ has the kernel $N$, i.e.,
$N=\Ker(\varphi) = H$.
Thus, $\Ker(p)$ coincides with $\widehat{N}$.
\end{proof}

\section{Special case $\mbfh$}

The main aim of this section is to provide a proof of the following

\begin{prop} \label{charaP4B4}
 $\mpfh$ is a characteristic subgroup of $\mbfh$.
\end{prop}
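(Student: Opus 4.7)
The plan is to employ the \emph{Cardano-Ferrari homomorphism} $\phi \colon B_4 \epi B_3$, defined on generators by $\sigma_1 \mapsto \sigma_1$, $\sigma_2 \mapsto \sigma_2$, $\sigma_3 \mapsto \sigma_1$. The braid relations are readily checked, and $\phi$ corresponds on symmetric-group quotients to the canonical projection $q \colon \frakS_4 \epi \frakS_4 / V_4 \cong \frakS_3$, where $V_4 \lhd \frakS_4$ is the Klein four-group of double transpositions. Since $\phi(\zeta_4) = \zeta_3^2 \in C_3$, after profinite completion and passing to centerless quotients, $\phi$ induces a surjection $\bar\phi \colon \mbfh \epi \mbth$ satisfying $\mpith \circ \bar\phi = q \circ \mpifh$; in particular $\bar\phi^{-1}(\mpth) = \mpifh^{-1}(V_4)$.

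The proof then proceeds in two steps. First, I would establish that $\Ker(\bar\phi)$ is a characteristic subgroup of $\mbfh$. Given this, since Proposition \ref{char} applied to $n=3$ guarantees that $\mpth$ is characteristic in $\mbth$, any $\alpha \in \aut(\mbfh)$ descends to an automorphism of $\mbth$ preserving $\mpth$, so $\alpha$ preserves $\bar\phi^{-1}(\mpth) = \mpifh^{-1}(V_4)$, an open characteristic subgroup of index $6$. Second, within $\mpifh^{-1}(V_4)$, one must distinguish $\mpfh$ among the normal subgroups of $\mbfh$ of index $24$ with $\frakS_4$-quotient, i.e., rule out that $\alpha$ sends $\mpfh$ to the kernel of one of E.\,Artin's non-standard surjections $\pi' \colon \mbfh \epi \frakS_4$. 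Concretely, it suffices to verify that for each such $\pi'$, either $(\pi')^{-1}(V_4) \neq \mpifh^{-1}(V_4)$---which contradicts $\alpha$ preserving the characteristic subgroup $\mpifh^{-1}(V_4)$---or $\Ker(\pi')$ can be ruled out by a secondary characteristic invariant of $\mbfh$ (such as the abelianization or a further characteristic finite quotient).

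The main obstacle is the first step: showing that $\Ker(\bar\phi)$ is characteristic in $\mbfh$. Since $\phi$ is defined via specific algebraic choices, this is not a priori transparent. I expect to address it via a topological description of $\bar\phi$ exploiting the identification $\cB_4 \cong \Gamma_{1,2}$ (the mapping class group of a torus with two marked points) alluded to in the introduction: under this correspondence, $\bar\phi$ should arise from a canonical homomorphism of profinite mapping class groups---for instance via the hyperelliptic involution on a genus-one surface, or a forgetful morphism linked to the universal monodromy representation of once-punctured elliptic curves mentioned in the introduction---making its characteristicity evident by functoriality of the profinite completion. Once this intrinsic description is in place, the remaining combinatorial verifications concerning Artin's non-standard surjections should follow by direct computation in the finite quotient $\mbfh / \mpifh^{-1}(V_4) \cong \frakS_3$ and the extension $1 \to V_4 \to \frakS_4 \to \frakS_3 \to 1$.
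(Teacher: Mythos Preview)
Your starting point---the Cardano--Ferrari surjection, which the paper denotes $b_{43}\colon\mbfh\epi\mbth$---is exactly right, but both steps of your plan have genuine gaps. For Step~1, the expectation that a canonical or functorial description of $\bar\phi$ will make $\Ker(\bar\phi)$ characteristic is not an argument: characteristicity demands stability under \emph{every} continuous automorphism of $\mbfh$, and a natural geometric origin for the map says nothing about exotic automorphisms. The paper in fact proceeds in the opposite order from your outline. It first shows directly that $\Ker(\mathfrak{P})=\bar\phi^{\,-1}(\mpth)=\mpifh^{-1}(V_4)$ is characteristic, using the fact (transported to the profinite setting via Lemma~\ref{ker}) that every surjection $B_4\epi\frakS_3$ has the same kernel. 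Only then does it prove $\Ker(b_{43})$ characteristic, and this step requires substantial input: the geometric identification $\Ker(b_{43})\cong(\Z/2\Z)\freeprod(\Z/2\Z)\freeprod(\Z/2\Z)$ as the profinite $\pi_1$ of the orbicurve $\mathbb{P}^1_{\infty,2,2,2}$, together with the Lubotzky--van~den~Dries theorem that nontrivial finitely generated closed normal subgroups of $\widehat F_2$ are free, to force any automorphic image of $\Ker(b_{43})$ (already known to land in $\Ker(\mathfrak{P})$) to die under $b_{43}$.

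For Step~2, your first proposed test is vacuous: since every surjection $\mbfh\epi\frakS_3$ has the same kernel, composing each of Artin's maps $\epsilon_i$ with $s_{43}\colon\frakS_4\epi\frakS_3$ always gives kernel $\Ker(\mathfrak{P})$, so $\epsilon_i^{-1}(V_4)=\mpifh^{-1}(V_4)$ for $i=1,2,3$ and this comparison never distinguishes them. You are thus forced onto an unspecified ``secondary invariant,'' and this is precisely where the paper again does real work. It shows that $\pzfh:=\Ker(b_{43})\cap\mpfh$ is characteristic in $\Ker(b_{43})$ (via the Herfort--Ribes description of torsion in profinite free products and a cartesian diagram over the abelianization), hence characteristic in $\mbfh$; it then recovers $\mpfh$ intrinsically as the kernel of the conjugation representation $\mbfh\to\aut(\pzfh^{\ab})$, which is shown to factor faithfully through $\frakS_4$. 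None of these ingredients is visible in your outline.
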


\noindent
In the proof of \cite[Theorem 11]{dg} claiming 
that $\cP_n$ is characteristic in $\cB_n$ for $n\ge 3$, 
we find an inaccurate argument for the case $n=4$:
By E.\,Artin's classic work (\cite[Theorem 1]{a}), each 
surjective homomorphism
$B_4\epi \frakS_4$ is equivalent to one of the 
following $\epsilon_1,\epsilon_2,\epsilon_3$
up to change of labels in $\{1,2,3,4\}$:
\begin{align*}
\epsilon_1:B_4 & \epi \Sf \qquad
(\sigma_1  \mapsto \ (12), \
\sigma_2 \mapsto \ (23) ,\
\sigma_3 \mapsto \ (34)); \\
\epsilon_2:B_4 & \epi \Sf \qquad
(\sigma_1  \mapsto \ (1234), \
\sigma_2 \mapsto \ (2134) ,\
\sigma_3 \mapsto \ (1234)); \\
\epsilon_3:B_4 & \epi \Sf \qquad
(\sigma_1  \mapsto \ (1234), \
\sigma_2 \mapsto \ (2134) ,\
\sigma_3 \mapsto \ (4321)).
\end{align*}
Among them, $\ker(\epsilon_1)=P_4$, while 
neither $\ker(\epsilon_2)$ or $\ker(\epsilon_3)$
equals to $P_4$, for
$\sigma_1^2\in P_4$ has non-trivial images
in $\frakS_4$:
$\epsilon_2(\sigma_1^2)=
\epsilon_3(\sigma_1^2)=(13)(24)$.

Let $\bar\epsilon_1:\cB_4\epi\frakS_4$ be the 
induced map.
Given an arbitrary automorphism 
$\phi\in \aut(\cB_4)$, consider the composite
$$
\epsilon_\phi:
B_4\to B_4/C_4=\cB_4
\underset{\phi}{\isom}
\cB_4
\overset{\bar\epsilon_1}{\longrightarrow}
\frakS_4.
$$
Dyer-Grossman \cite[p.1159]{dg} discusses that
$\epsilon_\phi$ cannot be equivalent to $\epsilon_2$,
for $(\bar\sigma_1\bar\sigma_2\bar\sigma_3)^2$ 
has order exactly two in $\cB_4$ hence does not
belong to $\cP_4$ (torsion-free),
while $\epsilon_2((\sigma_1\sigma_2\sigma_3)^2)=1$.
If moreover one knew $\epsilon_\phi\not\sim \epsilon_3$,
then one could get $\epsilon_\phi\sim\epsilon_1$ 
and hence $\phi(\cP_4)=\cP_4$ so as to conclude
Proposition \ref{charaP4B4}.
However, in \cite{dg}, apparently omitted is a
discussion about $\epsilon_3$ as the existence 
of $\epsilon_3$ is 
already missed in
their citation of Artin's theorem in \cite[Theorem 2]{dg}.
Since 
$\epsilon_3((\sigma_1\sigma_2\sigma_3)^2)=(12)(34)\ne 1$,
a simple replacement of the above argument for $\epsilon_\phi\not\sim\epsilon_2$
does not work to eliminate another possibility
$\epsilon_\phi\sim\epsilon_3$.

The fact that $\mathcal{P}_4$ is a characteristic subgroup of $\mbf$
has followed in a different approach by topologists
(see, e.g., \cite[Theorem 3]{Ko}) by using
finer analysis of the mapping class group action
on the complex of curves $C(S)$ on a
topological surface $S$.
However, a profinite variant of $C(S)$ to derive Proposition 
\ref{charaP4B4} still remains
unsettled even to this day.
Below, we give an alternative argument looking closely at a family of characteristic
subgroups of $\cB_4$. We argue in the profinite context, however, 
our discussion works also  
for the discrete case in the obvious interpretation.
Our main targets arise from the following
epimorphisms  
$b_{43}:\mbfh \epi \mbth$ and $s_{43}:\Sf \epi \st$
defined by 
\begin{equation} \label{orbicurve}
b_{43}:\mbfh  \epi  \mbth :
\begin{cases}
\bar\sigma_1, \bar\sigma_3 & \mapsto \bar\sigma_1, \\
\quad \bar\sigma_2 &  \mapsto \bar\sigma_2;
\end{cases}
\quad  s_{43}:\Sf \epi \St :
\begin{cases}
 (12), (34) & \mapsto  (12) ,  \\
\quad (23) & \mapsto  (23) ,
\end{cases}
\end{equation}
and the composition
\begin{equation}
\mathfrak{P} := \mpith\circ b_{43} \left(= s_{43}\circ\widehat{\pi}_4\right)  :\mbfh \epi \st
\end{equation}
where $\widehat{\pi}_n:\widehat{\mathcal{B}}_n \epi \mathfrak{S}_n$ is as in the previous section.
The kernel of $s_{43}$ is what is called the Klein four group 
\[
V_4 \ \defeq \ \ker(s_{43}) \ = \ \{id, (12)(34), (13)(24), (14)(23)\}
\subset\frakS_4.
\]
Denote by $p_{43}: \mpfh \to \mpth$ the restriction of $b_{43}:\mbfh\to \mbth$ and write $\pzfh:=\ker(p_{43})$.
We note that $p_{43}$ is not the same as the usual 
homomorphism obtained by forgetting 
one strand of pure 4-braids.
These maps fit into the following commutative diagram of 
horizontal and vertical exact sequences:
\begin{equation}
\label{BigDiagram}
\vcenter{
\xymatrix{
{}  & 1\ar[d] &  1\ar[d] & 1\ar[d] & {} \\
1 \ar[r] & \pzfh \ar[d] \ar[r] & \ker(b_{43}) \ar[d] \ar[r] & V_4 \ar[d] \ar[r] & 1 \\
1 \ar[r]  & \mpfh \ar[r] \ar[d] ^{p_{43}}&\mbfh \ar[rd]_{\mathfrak{P}} \ar[r]^{\mpifh} \ar[d] _{b_{43}}
& \Sf \ar[r] \ar[d]^{s_{43}} &1 \\
1 \ar[r] &\mpth \ar[r] \ar[d] &\mbth \ar[r]_{\mpith} \ar[d] &\St \ar[d] \ar[r] &1 \\
{} & 1 & 1 & 1 & {}. 
}}
\end{equation}

Concerning the two sequences 
of subgroups
$\mbfh
\supset\ker(b_{43})\supset\pzfh$
and $\mbfh\supset\ker(\mathfrak{P})\supset\mpfh
$, we shall prove

\begin{prop} \label{charaKer}
(i) $\pzfh$ is a characteristic subgroup of $\ker(b_{43})$. \\
(ii) $\ker(\mathfrak{P})$ is a characteristic subgroup of $\mbfh$. \\
(iii)  $\ker(b_{43})$ is a characteristic subgroup of $\mbfh$.\\
(iv) $\pzfh$ is a characteristic subgroup of $\mbfh$.\\
(v)  $\mpfh$ is a characteristic subgroup of $\mbfh$.
\end{prop}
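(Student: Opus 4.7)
My plan is to prove the five items in the order (iii), (ii), (i), (iv), (v), exploiting the natural dependencies; the crux lies in item (iii).

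For (iii), I would show that for any $\phi\in\aut(\mbfh)$ the composite $b_{43}\circ\phi:\mbfh\epi\mbth$ has kernel $\ker(b_{43})$, by establishing a rigidity statement: every continuous epimorphism $\mbfh\epi\mbth$ has kernel $\ker(b_{43})$. By Lemma~\ref{ker} together with residual finiteness of $\mbf$, the classification reduces to the discrete case, where the images of $\bar\sigma_1,\bar\sigma_2,\bar\sigma_3$ in $\mbt$ must satisfy both the braid relations and the extra torsion relations defining $\mbt\cong PSL_2(\Z)$. Exploiting these combinatorial constraints, together with the outer $\Z/2\Z$-action on $\mbt$ coming from Dyer--Grossman's theorem, one should pin down the Cardano--Ferrari identification $\bar\sigma_1\equiv\bar\sigma_3$ as essentially the only possibility, yielding (iii).

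Item (ii) then follows formally: $\phi$ descends to $\bar\phi\in\aut(\mbth)$, and by Proposition~\ref{char} (applied with $n=3$) $\bar\phi(\mpth)=\mpth$. Since $\ker(\mathfrak{P})=b_{43}^{-1}(\mpth)$, we conclude $\phi(\ker(\mathfrak{P}))=\ker(\mathfrak{P})$. For (i), I would analyze the exact sequence $1\to\pzfh\to\ker(b_{43})\to V_4\to 1$ and identify $\pzfh$ as the kernel of the intrinsic surjection $\ker(b_{43})\epi V_4$. Concretely, this surjection comes from the restriction of $\widehat\pi_4$, and the $V_4$-quotient should be recognizable intrinsically, for example as the unique (up to automorphism of $V_4$) rank-$2$ elementary abelian quotient of $\ker(b_{43})$, a fact to be verified by a direct abelianization computation.

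Items (iv) and (v) are then near-immediate. For (iv), any $\phi\in\aut(\mbfh)$ restricts via (iii) to an automorphism of $\ker(b_{43})$, which by (i) preserves $\pzfh$. For (v), (iv) shows $\phi$ descends to $\bar\phi\in\aut(\mbfh/\pzfh)$, where the latter fits in $1\to V_4\to\mbfh/\pzfh\to\mbth\to 1$ with $\mpfh/\pzfh$ as a section over $\mpth\subset\mbth$. By (iii) and Proposition~\ref{char} (for $n=3$), $\bar\phi$ preserves both $V_4$ and $\ker(\mathfrak{P})/\pzfh$; a final short argument (using, e.g., uniqueness of complement to $V_4$ in $\ker(\mathfrak{P})/\pzfh$ via the torsion-freeness of $\mpfh/\pzfh\cong\mpth$) shows $\bar\phi(\mpfh/\pzfh)=\mpfh/\pzfh$, and lifting gives $\phi(\mpfh)=\mpfh$.

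The main obstacle is unquestionably (iii): the required rigidity for the Cardano--Ferrari surjection $b_{43}$ is not formal and cannot be extracted from the (incomplete) Dyer--Grossman torsion argument recalled in the excerpt. It is precisely the gap that must be newly filled, tied—as the introduction hints—to the geometry of once-punctured elliptic curves and the identification $\mbf\cong\Gamma_{1,2}$.
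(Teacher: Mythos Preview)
Your plan has a genuine gap at the crux, item (iii). You invoke Lemma~\ref{ker} to reduce the classification of continuous epimorphisms $\mbfh\epi\mbth$ to the discrete case, but that lemma requires the target $Q$ to be \emph{finite}, whereas $\mbth$ is the profinite completion of $\mathrm{PSL}_2(\Z)$ and is infinite; the reduction simply does not go through, and even the discrete classification of surjections $\mbf\epi\mbt$ is left as an unspecified ``combinatorial'' exercise. The paper avoids this difficulty by reversing your order of (ii) and (iii). It proves (ii) first and independently: every epimorphism $\mbfh\epi\st$ has kernel $\ker(\mathfrak{P})$, and here Lemma~\ref{ker} \emph{does} apply because $\st$ is finite (the discrete input being Lin's classification of homomorphisms $B_4\epi\st$). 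Then (iii) is obtained from (ii) by a short structural argument: for $\phi\in\aut(\mbfh)$, (ii) gives $\phi(\ker(b_{43}))\subset\ker(\mathfrak{P})$, so $b_{43}(\phi(\ker(b_{43})))$ is a topologically finitely generated closed normal subgroup of $\mpth\cong\widehat F_2$; by Lubotzky--van den Dries such a subgroup is trivial or free, and since $\ker(b_{43})\cong(\Z/2\Z)\freeprod(\Z/2\Z)\freeprod(\Z/2\Z)$ has abelianization $(\Z/2\Z)^3$ it cannot surject onto a nontrivial free profinite group, forcing $\phi(\ker(b_{43}))\subset\ker(b_{43})$.

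Your sketches of (i) and (v) also fail as written. For (i), the abelianization of $\ker(b_{43})$ is $(\Z/2\Z)^3$, which has seven distinct kernels of surjections onto $V_4$, so the ``unique rank-$2$ quotient'' characterization of $\pzfh$ is false; the paper instead shows (via Herfort--Ribes on torsion in profinite free products) that every automorphism of $\ker(b_{43})$ permutes the three torsion conjugacy classes, hence acts on $(\Z/2\Z)^3$ only through permutations of the three coordinates, and therefore fixes the diagonal $\Z/2\Z\hookrightarrow(\Z/2\Z)^3$, whose preimage is exactly $\pzfh$. For (v), the extension $1\to V_4\to\ker(\mathfrak{P})/\pzfh\to\mpth\to 1$ is central (the $V_4$-action being trivial), so complements form a torsor under $H^1(\mpth,V_4)=\mathrm{Hom}(\widehat F_2,V_4)\cong V_4^2\ne 0$ and are neither unique nor distinguished by torsion-freeness; the paper instead characterizes $\mpfh$ intrinsically as the kernel of the conjugation representation $\mbfh\to\aut(\pzfh^{\ab})$, which is well defined once (iv) is known.
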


Proposition \ref{charaP4B4} is obtained as (v) of the above Proposition.
Here is a simple immediate consequence of it:

\begin{cor}
$\widehat{P}_n$ is a charactersitic subgroup
of $\widehat{B}_n$ for every $n\ge 3$.
\end{cor}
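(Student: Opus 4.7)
The plan is to deduce the corollary by combining (v) of Proposition~\ref{charaKer} (the case $n=4$) and Proposition~\ref{char} (the case $n\ne 4$, $n\ge 3$) with the centrality of $\widehat{C}_n$ in $\widehat{B}_n$. The key point is that characteristicness for the \emph{centerless} quotient $\widehat{\mathcal{B}}_n = \widehat{B}_n/\widehat{C}_n$, which has already been established, can be pulled back along $\widehat{B}_n \epi \widehat{\mathcal{B}}_n$ because its kernel $\widehat{C}_n$ is automatically preserved by every automorphism of $\widehat{B}_n$.

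Concretely, I would first note that Propositions \ref{char} and \ref{charaP4B4} together give: for every $n\ge 3$, the subgroup $\widehat{\mathcal{P}}_n$ is characteristic in $\widehat{\mathcal{B}}_n$. Next, using (\ref{various-centers}), we have $\widehat{C}_n = Z(\widehat{B}_n)$, so $\widehat{C}_n$ is a characteristic subgroup of $\widehat{B}_n$. Therefore any $\phi\in\mathrm{Aut}(\widehat{B}_n)$ fixes $\widehat{C}_n$ setwise and descends to an automorphism $\bar\phi \in \mathrm{Aut}(\widehat{\mathcal{B}}_n)$ via the profinite completion of the exact sequence (\ref{exactBn}).

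Finally, by the characteristicness proved in the previous paragraph, $\bar\phi(\widehat{\mathcal{P}}_n) = \widehat{\mathcal{P}}_n$. Since the profinite completion of (\ref{exactPn}) remains exact (as recalled just before Definition~\ref{moduli}), $\widehat{P}_n$ is precisely the preimage of $\widehat{\mathcal{P}}_n$ under the quotient map $\widehat{B}_n \epi \widehat{\mathcal{B}}_n$. Combined with $\phi(\widehat{C}_n) = \widehat{C}_n$, this forces $\phi(\widehat{P}_n) = \widehat{P}_n$, proving the corollary.

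There is no substantive obstacle once Proposition~\ref{charaP4B4} is in hand; the only thing to verify carefully is that the profinite completion of (\ref{exactPn}) is still exact with injective kernel, so that $\widehat{P}_n$ really is the full preimage of $\widehat{\mathcal{P}}_n$ in $\widehat{B}_n$. This is exactly what is asserted in the paragraph preceding Definition~\ref{moduli}, so the argument is essentially a bookkeeping exercise reducing the problem for $\widehat{B}_n$ to the already-settled problem for $\widehat{\mathcal{B}}_n$.
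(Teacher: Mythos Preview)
Your argument is correct and is essentially identical to the paper's proof: both invoke Propositions~\ref{char} and~\ref{charaP4B4} to get that $\widehat{\mathcal{P}}_n$ is characteristic in $\widehat{\mathcal{B}}_n$, then use that $\widehat{C}_n=Z(\widehat{B}_n)$ is characteristic so that $\widehat{P}_n$, being the preimage of $\widehat{\mathcal{P}}_n$ under $\widehat{B}_n\twoheadrightarrow\widehat{\mathcal{B}}_n$, is characteristic in $\widehat{B}_n$. Your write-up simply spells out in more detail what the paper compresses into two sentences.
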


\begin{proof}
Proposition \ref{char} and Proposition \ref{charaP4B4} show
that $\mpnh$ is a characteristic subgroup of $\mbnh$ for every
$n\ge 3$. Assertion follows from this and 
the fact that $\widehat{P}_n$ is the inverse image 
of $\mpnh$ by the projection $\widehat{B}_n\epi\mbnh$ 
whose kernel is the center $\cnh$ of $\widehat{B}_n$.
\end{proof}

For the proof of Proposition \ref{charaKer}, note first that
(iv) follows from (i) and (iii).
We will apply (iv) for the proof of (v).
Assertion (ii) will be used to prove (iii).
In fact, (ii) follows from a stronger assertion
that every epimorphism $\mbfh \epi \st$ has 
the same kernel as $\ker(\mathfrak{P})$. In fact, it is not difficult
to see that every (discrete group) homomorphism 
$B_4\epi \st$ is conjugate to the standard one $B_4\epi B_3\epi \frakS_3$ (cf.\,e.g., 
\cite[Theorem 3.19 (a)]{lin}).
Since $\mbf$ is residually finite, the profinite version 
follows from Lemma \ref{ker}.
To complete the proof of Proposition \ref{charaKer},
it remains to prove (i), (iii) and (v).

\medskip
\noindent
 {\it Proof of Proposition} \ref{charaKer} (i): 
Let us begin with geometric interpretation of 
$\pzfh\subset \ker(b_{43})$ which has been
well studied by topologists
(see, e.g., \cite[\S 2.1]{aswy}, \cite[\S 3]{KS}).
%
One may regard the standard lift $\beta_{43}:\widehat B_4\epi \widehat B_3$ of $b_{43}:\widehat\cB_4\to\widehat\cB_3$ 
(given by $\sigma_1,\sigma_2,\sigma_3\mapsto \sigma_1,\sigma_2,\sigma_1$ respectively) 
as the $\pi_1^\et$-transform of 
the ``Cardano-Ferrari mapping 
$\mathfrak{F}_0: (\mathbb{A}^4\setminus D)_0
\to (\mathbb{A}^3\setminus D)_0$'' 
assigning to a monic quartic (with no multiple zeros)
its cubic resolvent
(in the notations of \cite[\S 5.4]{N13}). The kernel of $\beta_{43}$
is isomorphic to the free profinite group $\widehat F_2$
of rank 2. In fact, after Mordell transformation, the homomorphism 
$\beta_{43}=\pi_1^\et(\mathfrak{F}_0)$ turns to interpret
the monodromy of the universal
family of the (affine part of) elliptic curves
\begin{equation*}
\xymatrix{
{} & \mathcal{E}\setminus\{O\}= \{Y^2=4X^3-g_2X-g_3\} \ar[d] \\ 
{} & \mathfrak{M}_{1,1}^{\,\omega} =\{(g_2,g_3)\mid \Delta\ne 0\}.
}
\end{equation*}
Let $\sqrt{\zeta_4}:=(\sigma_1\sigma_2\sigma_3)^2$
so that $\beta_{43}(\sqrt{\zeta_4})=\zeta_3\in B_3$.
Then, the reduced sequence
\begin{equation}
\begin{CD}
1 @>>> F_2 @>>> B_4/\la\sqrt{\zeta_4}\ra @>>> B_3/\la\zeta_3\ra=\mathrm{PSL}_2(\Z) @>>> 1
\end{CD}
\end{equation}
fits in the orbifold quotient of the
complex model of elliptic curve family 
over the upper half plane.
Taking into account that $\sqrt{\zeta_4}\pmod{\la\zeta_4\ra}$ 
acts on each elliptic curve 
$E:Y^2=4X^3-g_2X-g_3$
by the switching $\pm Y$ involution, we see that
$\ker(b_{43})$ can be regarded as the
fundamental group of an orbicurve 
$\mathbb{P}^1_{\infty,2,2,2}$ obtained as the $X$-line 
from $(E\setminus\{O\})/\{\pm 1\}$;
it turns out to be isomorphic to
the profinite free product of three copies of $\Z/2\Z$:
\begin{align}
\ker(b_{43})&=\pi_1(\mathbb{P}^1_{\infty,2,2,2}/\C) \\
&=(\Z/2\Z)\freeprod(\Z/2\Z)\freeprod(\Z/2\Z) 
\notag
\end{align}
which may also be regarded as the profinite completion 
of discrete free product 
$(\Z/2\Z)\ast(\Z/2\Z)\ast(\Z/2\Z)$
(\cite[\S 9.1]{rz}).
The normal subgroup $\pzfh$ of $\ker(b_{43})$ corresponds
to the fundamental group of the 
Galois cover of $\mathbb{P}^1_{\infty,2,2,2}$
with group $V_4$ given in 
the Latt\'es cover diagram:
\begin{equation}
\vcenter{
\xymatrix{
E\setminus E[2] \ar[d] \ar[r] & \mathbb{P}^1
-\{e_0,e_1,e_2,e_3\} \ar[d] \\ 
E\setminus \{O\} \ar[r] &\mathbb{P}^1_{\infty,2,2,2}
}}
\end{equation}
where the left vertical arrow is the isogeny
of punctured elliptic curves by multiplication by 2,
and horizontal arrows correspond to 
the $\{\pm 1\}$-quotients.
{}From this we obtain a cartesian diagram of profinite groups:
\begin{equation}
\label{Pi04inkeralpha}
\vcenter{
\xymatrix{
\ker(b_{43})=(\Z/2\Z)\freeprod(\Z/2\Z)\freeprod(\Z/2\Z) 
\ar @{->>}[r]
& (\Z/2\Z)\times(\Z/2\Z)\times(\Z/2\Z)  \\
\widehat\Pi_{0,4} \ar @{_{(}->}[u]\ar @{->>}[r]
&(\Z/2\Z) \ar @{_{(}->}[u]_{{}_{diagonal \ map}},
}
}
\end{equation}
where the upper horizontal arrow is the abelianization map.
%
Moreover, according to Herfort-Ribes (\cite[Theorem 2 (i)]{HR}),
the torsion elements of  
$(\Z/2\Z)\freeprod(\Z/2\Z)\freeprod (\Z/2\Z)$
form exactly the three conjugacy classes of order two
which, therefore, must be preserved as a set 
under $\aut(\ker(b_{43}))$.
This characterizes the diagonal image of $(\Z/2\Z)$
in the right hand side of (\ref{Pi04inkeralpha}).
Thus we conclude that
$\widehat\Pi_{0,4}$ is characteristic 
in $\ker(b_{43})$ as the pull-back image of 
$(\Z/2\Z)\overset{diag.}{\hookrightarrow} (\Z/2\Z)^3$
along the abelianization of $\ker(b_{43})$.
\qed

\medskip
\noindent
 {\it Proof of Proposition} \ref{charaKer} (iii):
To prove (iii), pick any $\phi \in \aut(\mbfh)$.
We first show that $\phi(\ker(b_{43})) \subset \ker(b_{43})$.
As $\ker(\mathfrak{P})$ is characteristic in $\widehat\cB_4$
as shown in (ii),  it follows that
$\phi(\ker(b_{43}))\subset \ker(\mathfrak{P})$.
Hence $b_{43}$ maps $\phi(\ker(b_{43}))$ onto 
a subgroup of $\mpth(\isom\pgtoh\cong\widehat F_2$).
But $\phi(\ker(b_{43}))$ is isomorphic to
$\ker(b_{43})$ which is a topologically finitely 
generated closed normal subgroup of $\mbfh$.
Since $\widehat F_2$ has no nontrivial non-free 
finitely generated normal subgroups 
(\cite[Corollary 3.14]{LvD})
and 
since $\ker(b_{43})\cong (\Z/2\Z)
\freeprod
(\Z/2\Z)\freeprod(\Z/2\Z)$ has finite 
abelianization $(\Z/2\Z)^3$, the image $\phi(\ker(b_{43}))$ must be annihilated
by $b_{43}$, i.e., $ \phi(\ker(b_{43}))\subset \ker(b_{43})$.
We can argue in the same way after replacing $\phi$ 
by $\phi^{-1}$ to obtain 
$ \phi^{-1}(\ker(b_{43}))\subset \ker(b_{43})$.
Combining both inclusions implies
$ \phi(\ker(b_{43}))=\ker(b_{43})$.
\qed

\medskip
\noindent
 {\it Proof of Proposition} \ref{charaKer} (v):
Let us write $[\ast]^\ab$ for the abelianization of $[\ast]$.
Since we already know `(iv): $\pzfh$ is characteristic in $\mbfh$'
from (i)-(iii), for proving $\mpfh$ characteristic in $\mbfh$,
it suffices to show the assertion 
that $ \mpfh$ is the kernel
of the conjugate representation
$\rho:\mbfh \ {\rightarrow} \ \aut(\pzfh^{\mathrm{ab}})$.
First we note that $\rho$ factors through 
$\bar\rho: \mbfh/\mpfh\cong\frakS_4\to
\aut(\pzfh^{\mathrm{ab}})$. 
This follows from the observation
that $p_{43}^\ab$ injects
$\pzfh^{\mathrm{ab}}$ into $\mpfh^\ab$: 
Indeed, writing $\{\bar x_{ij}\}$ for the image of
the standard generator system
$\{x_{ij}=\sigma_{j-1}\cdots\sigma_{i+1}\sigma_i^2
\sigma_{i+1}^{-1}\cdots\sigma_{j-1}^{-1}\mid
1\le i<j\le n\}$ of 
$\cP_n$, we find
\begin{equation} \label{p34ab}
p_{43}^\ab:\mpfh^\ab \to \mpth^\ab :\ 
\begin{cases}
\bar x_{12}, \bar x_{34} & \mapsto \bar x_{12}, \\
\bar x_{13}, \bar x_{24} &  \mapsto \bar x_{13}, \\
\bar x_{14}, \bar x_{23} &  \mapsto \bar x_{23}.
\end{cases}
\end{equation}
Taking into account the single relation
$\bar x_{12}+ \bar x_{13} +\bar x_{14}
+\bar x_{23}+\bar x_{24}+\bar x_{34}=0$ for 
$\mpfh^\ab$
(respectively, $\bar x_{12}+ \bar x_{13} +\bar x_{14}=0$ for 
$\mpth^\ab$), we easily see from the description
(\ref{p34ab}) of $p_{43}^\ab:\zh^5\epi \zh^2$
 that $\ker(p_{43}^\ab)$
is isomorphic to $\zh^3$ (torsion-free) 
into which $\pzfh^\ab$ must inject.
Then, to complete proof of the assertion, 
it suffices to see faithfulness of  
$\bar\rho: \mbfh/\mpfh\cong\frakS_4\to
\aut(\pzfh^{\mathrm{ab}})$.
This is easily seen from the general fact that the action of 
$\cB_n/\cP_n=\frakS_n$ on
the $\bar x_{ij}\in\mathcal{P}^\ab_n$ is 
given by the natural action on indices, once 
declared $\bar x_{ij}=\bar x_{ji}$.
The action of $\frak S_4$ on $\pzfh^\ab$ turns
out to be the standard permutation representation
$\zh^4$ modulo the diagonal line, which is
faithful. \qed

\section{Proofs of Theorems A and B}

By virtue of  Propositions \ref{char} and \ref{charaP4B4}, we know that
$\mpnh $ is a characteristic subgroup of $\mbnh$ for
$n\ge 3$.
The following proposition follows immediately from this together with
the well-known fact that $\out(\sn) = \{1\}$ in the case $n\ne 6$.
However, the case $n=6$
requires a special care, since 
$\out(\mathfrak{S}_6) \cong \zz/2\zz$.
Theorem \ref{hmm} (Hoshi-Minamide-Mochizuki)
allows us to give a uniform proof working 
for all $n\ge 4$.


\begin{prop}
\label{out} 
Regard $\sn$ as the quotient of 
$\mbnh$ and of $\widehat\Gamma_{0,[n]}$
by $\varpi_n:B_n\to\sn$ in \S 2. \\
(i) Every automorphism of $\mbnh$ induces an inner automorphism of $\sn$ for $n\ge 3$. \\
(ii) $\widehat\Gamma_{0,n}$ is a characteristic subgroup of $\widehat\Gamma_{0,[n]}$ in the profinite completion of (\ref{exactGn}),
and every automorphism of $\widehat\Gamma_{0,[n]}$ induces an inner automorphism of $\sn$ for $n\ge 5$.
\end{prop}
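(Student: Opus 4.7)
My plan is to treat (i) and (ii) in parallel, reducing both to Theorem \ref{hmm} via the elementary fact that the two factors of a direct product commute; the one case not handled this way is $n=3$ in (i), which is immediate from $\out(\St)=\{1\}$. For (i) with $n\ge 4$, given $\phi\in\aut(\mbnh)$, Propositions \ref{char} and \ref{charaP4B4} imply that $\phi$ preserves $\mpnh$, hence induces $\bar\phi\in\aut(\sn)$ and restricts to $\psi\defeq\phi|_{\mpnh}$. Via diagram $(\ast)$ I identify $\mpnh\cong\pgnoh$; under this identification the outer action $\alpha\colon\sn\to\out(\mpnh)$ arising from $1\to\mpnh\to\mbnh\to\sn\to 1$ is the composite of $\iota_n\colon\sn\hookrightarrow\sno$ with the label-permutation embedding $\sno\hookrightarrow\out(\pgnoh)$, which by Theorem \ref{hmm} is the second-factor inclusion in $\out(\pgnoh)\cong\gt\times\sno$. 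Writing $[\psi]=(t,s)\in\gt\times\sno$, the identity $\phi\circ c_{\tilde\sigma}\circ\phi^{-1}=c_{\phi(\tilde\sigma)}$ (for $\sigma\in\sn$, any lift $\tilde\sigma\in\mbnh$, where $c_x$ denotes conjugation by $x$) descends in $\out(\mpnh)$ to $[\psi]\,\alpha(\sigma)\,[\psi]^{-1}=\alpha(\bar\phi(\sigma))$. Commutativity of the two direct factors rewrites this as $\iota_n(\bar\phi(\sigma))=s\,\iota_n(\sigma)\,s^{-1}$ for every $\sigma\in\sn$, so $s$ normalises $\iota_n(\sn)\subset\sno$; since $\iota_n(\sn)=\mathrm{Stab}_{\sno}(n+1)$ equals its own normaliser in $\sno$ for $n\ge 2$, we get $s\in\iota_n(\sn)$, and $\bar\phi$ is the inner automorphism by $\iota_n^{-1}(s)$.

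For (ii), I would first establish that $\widehat\Gamma_{0,n}$ is characteristic in $\widehat\Gamma_{0,[n]}$ for $n\ge 5$. By Lemma \ref{ker}, applied to the residually finite group $\gn$ (whose residual finiteness descends from $\mbn$ via the epimorphism $\mbn\epi\gn$ induced by $\Psi_n$), it suffices to check that every discrete epimorphism $\gn\epi\sn$ has kernel $\pgn$. Precomposing any such epimorphism with $\mbn\epi\gn$ gives an epimorphism $\mbn\epi\sn$; for $n\ne 4$---in particular for $n\ge 5$---Artin's classification invoked in the proof of Proposition \ref{char} forces its kernel to be $\mpn$. Since $\mbn\epi\gn$ sends $\mpn$ onto $\pgn$, the kernel of the original $\gn\epi\sn$ equals $\pgn$, as desired. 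With this characteristic property in hand, the proof that any $\phi\in\aut(\widehat\Gamma_{0,[n]})$ induces an inner automorphism of $\sn$ repeats the argument of (i) verbatim, now applying Theorem \ref{hmm} directly to $\widehat\Gamma_{0,n}$ (valid for $n\ge 5\ge 4$): the outer action $\sn\hookrightarrow\out(\widehat\Gamma_{0,n})\cong\gt\times\sn$ from the extension is the second-factor inclusion, and the same commutativity calculation yields $\bar\phi(\sigma)=s\sigma s^{-1}$ for every $\sigma\in\sn$.

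The one substantive point to verify is the naturality claim implicit above: that the outer action of $\sn$ (resp.\ $\sno$) on $\widehat\Gamma_{0,n}$ (resp.\ $\pgnoh$) arising from the extension in diagram $(\ast)$ coincides, under the Hoshi-Minamide-Mochizuki identification, with the second-factor inclusion of Theorem \ref{hmm}. This is essentially how the embedding $\mathfrak{S}_n\hookrightarrow\out(\widehat\Gamma_{0,n})$ is defined in the paragraph preceding Theorem \ref{hmm} (as the ``permutation of labels''), so once recorded, the remainder is a routine unwinding of the commutativity of the two factors in $\gt\times\sn$.
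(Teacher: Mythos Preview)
Your proposal is correct and follows essentially the same route as the paper: both reduce to Theorem \ref{hmm} via the conjugation identity $[\psi]\,\alpha(\sigma)\,[\psi]^{-1}=\alpha(\bar\phi(\sigma))$ (which the paper packages as the commutative diagram (\ref{diag41})) and conclude from the fact that $\iota_n(\sn)$ is self-normalizing in $\sno$. One small correction: residual finiteness does not in general pass to quotients, so your parenthetical justification for $\gn$ is invalid; the paper sidesteps this by working directly at the profinite level, precomposing an arbitrary epimorphism $\widehat\Gamma_{0,[n]}\epi\sn$ with $\mbnh\epi\widehat\Gamma_{0,[n]}$ and invoking Proposition \ref{char} (whose own appeal to Lemma \ref{ker} only needs residual finiteness of $\mbn$).
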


\begin{proof}
(i) As $\out(\St) = \{1\}$, the assertion is trivial when
$n=3$. Suppose $n\ge4$ and pick any $\phi \in \aut(\mbnh)$.
Then, it follows from 
Propositions \ref{char} and \ref{charaP4B4}, 
that $\phi$ induces 
$(\phi_P,\phi_\frakS)
\in\aut(\mpnh)\times\aut(\frakS_n)$,
Moreover $\phi_P$ induces $\phig\in\aut(\pgnoh)$ 
via the natural isomorphism $\mpnh \isom \pgnoh$
given by $\Phi_n$ of \S 2. 
Let $\bar\phi_\Gamma\in\out(\pgnoh)$ be the outer
class of $\phi_\Gamma$, and 
let $(\phi_0,\phi_1)\in\gt\times\frakS_{n+1}$ be the image
of $\bar\phi_\Gamma$ under the 
isomorphism $\out(\pgnoh)\isom\gt\times\frakS_{n+1}$ of
Theorem \ref{hmm}. Then we have the commutative diagram
\begin{equation}
\label{diag41}
\vcenter{
\xymatrix{
\sn \ar[r]^-{\chi_n} \ar[d]_{\vertsim}^{\phis}
& \out(\pgnoh) \ar[r]^{\sim} \ar[d]^{\inn(\bar{\phi}_{\Gamma})}_{\vertsim}
& \gt\times\frakS_{n+1} 
\ar[d]^{\inn(\phi_0,\phi_1)
}_{\vertsim}
\\
\sn \ar[r]^-{\chi_n} &\out(\pgnoh) \ar[r]^{\sim} 
& \gt\times\frakS_{n+1}
}}
\end{equation}
where $\chi_n : \sn\to \out(\mpnh)=\out(\pgnoh)$
is the natural isomorphism
regarding the commutative diagram
$(*)$ in Definition \ref{moduli}.
Since $\chi_n$ factors through
$\iota_n : \sn \hra \sno$, the above (\ref{diag41}) makes the diagram
\[
\begin{CD}
\sn @>\iota_n>> \sno \\
@V{\phis}V{\vertsim}V 
@V{\vertsim}V\inn(\phi_1)V \\
\sn @>\iota_n>> \sno
\end{CD}
\]
commutative, hence $\phi_1
$ normalizes (hence lies in)
the image of $\iota_n$.  
{}From this follows that $\phis$ is an inner automorphism of $\sn$.

(ii): Recall from \S 2 that there is a surjection sequence
$\bnh \epi \mbnh \epi \widehat\Gamma_{0,[n]}\epi \frakS_n$.
By Proposition \ref{char},
every epimorphism from $\mbnh$ to $\frakS_n$ has 
kernel $\mpnh$ for $n\ge 5$.
This makes $\widehat\Gamma_{0,n}$ to be a 
characteristic subgroup of $\widehat\Gamma_{0,[n]}$ 
as the pull-back of $\mpnh \subset \mbnh$. 
For the rest, we can argue in exactly a similar (and simpler) way to
the case (i) with employing
$\chi_n':\sn\to\out(\widehat\Gamma_{0,n})\cong\gt\times\frakS_n$
for the role of $\chi_n$ in (i).
We leave the rest of detail to the reader.
\end{proof}

For the proof of Theorem B, we prepare a simple 
lemma of group theory.
Let 
\[
\begin{CD}
1 @>>> \Delta @>>> \Pi @>>> G @>>> 1
\end{CD}
\]
be an exact sequence of profinite groups with 
$\rho : G \to \out(\Delta)$ 
the associated outer representation.
Let $Z_{\out(\Delta)}(\im(\rho))$ denote the centralizer
of the image $\rho(G)$ in $\out(\Delta)$.
Assume that $\Delta$ and $G$ are topologically finitely generated
so that $\aut(\Delta)$, $\aut(G)$ are profinite groups.
Write $\aut_G(\Pi)$ (resp. $\inn_\Pi(\Delta)$) 
for the group of automorphisms of $\Pi$ 
which preserve $\Delta \sseq \Pi$ and
induce the identity automorphism of $G$
(resp. for the group of inner automorphisms 
of $\Pi$ by the elements of $\Delta$).
Then,
\begin{lem}
\label{dpg}
Notations being as above, we have the following assertions.\\
(i) Suppose $Z(\Delta)=\{1\}$.
Then the restriction map $\aut_G(\Pi) \to \aut(\Delta)$ induces an isomorphism
\[
\aut_G(\Pi)/\inn_\Pi(\Delta) \ \isom \ Z_{\out(\Delta)}(\im(\rho)).
\]
(ii)
Suppose $Z(G)=\{1\}$ and that $\Delta$ is a characteristic subgroup of $\Pi$.
Then we have an exact sequence of profinite groups
\[
\begin{CD}
1 @>>> \aut_G(\Pi)/\inn_\Pi(\Delta) @>\mathj>> \out(\Pi) @>\varpi>> \out(G).
\end{CD}
\] 
\end{lem}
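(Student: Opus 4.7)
My plan for (i) is to analyze the restriction homomorphism
$r:\aut_G(\Pi)\to\aut(\Delta)$, $\phi\mapsto\phi|_\Delta$,
and its composite $\bar{r}$ with $\aut(\Delta)\epi\out(\Delta)$.
First I would verify that $\im(\bar{r})\sseq Z_{\out(\Delta)}(\im(\rho))$:
for $\phi\in\aut_G(\Pi)$ and any $\pi\in\Pi$, I write $\phi(\pi)=\pi\delta$
with a unique $\delta\in\Delta$ (possible since $\phi$ induces the identity on $G$),
and a direct calculation yields
$\phi|_\Delta\circ\inn_\Pi(\pi)|_\Delta
=\inn_\Pi(\pi)|_\Delta\circ\inn_\Delta(\delta)\circ\phi|_\Delta$,
which shows that $[\phi|_\Delta]$ commutes with $\rho(\bar{\pi})$ in $\out(\Delta)$.
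To identify $\Ker(\bar{r})$ with $\inn_\Pi(\Delta)$, I would observe that after
replacing $\phi$ by $\inn_\Pi(\delta_0)^{-1}\phi$ one may assume $\phi|_\Delta=\id$;
then writing $\phi(\pi)=\pi c$ with $c\in\Delta$ and applying $\phi$ to
$\pi d\pi^{-1}\in\Delta$ yields $cdc^{-1}=d$ for every $d\in\Delta$,
so $c\in Z(\Delta)=\{1\}$ and $\phi=\id$.

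For the surjectivity half of (i), given $\alpha\in\aut(\Delta)$ whose class
centralizes $\im(\rho)$, I would construct $\phi\in\aut_G(\Pi)$ restricting to $\alpha$
by an explicit formula. Writing $c_\pi:=\inn_\Pi(\pi)|_\Delta$ for the conjugation
representation $c:\Pi\to\aut(\Delta)$, the hypothesis produces for each $\pi\in\Pi$
an element $\delta(\pi)\in\Delta$ with
$\alpha\,c_\pi\,\alpha^{-1}=c_{\pi\delta(\pi)}$;
uniqueness of $\delta(\pi)$ holds because
$\Ker(c)\cap\Delta=C_\Pi(\Delta)\cap\Delta=Z(\Delta)=\{1\}$.
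I then set $\phi(\pi):=\pi\delta(\pi)$. The multiplicativity
$\phi(\pi_1\pi_2)=\phi(\pi_1)\phi(\pi_2)$ follows because both sides have
the same image $\alpha\,c_{\pi_1\pi_2}\,\alpha^{-1}$ under $c$,
so their ratio lies in $C_\Pi(\Delta)$; since both sides also lie in the same
coset of $\Delta$ in $\Pi$, the ratio lies in $\Delta\cap C_\Pi(\Delta)=\{1\}$.
The identities $\phi|_\Delta=\alpha$ and $\phi\equiv\id\pmod{\Delta}$
then fall out of the defining relation for $\delta$, and the five-lemma gives bijectivity.

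For (ii), the characteristic hypothesis on $\Delta$ makes
$\varpi:\out(\Pi)\to\out(G)$ well-defined, and $\varpi\circ\mathj=1$ by construction.
For injectivity of $\mathj$: if $\phi\in\aut_G(\Pi)$ equals $\inn_\Pi(\pi)$
for some $\pi\in\Pi$, then the induced automorphism $\inn_G(\bar{\pi})$ on $G$
must be trivial, so $\bar{\pi}\in Z(G)=\{1\}$, giving $\pi\in\Delta$ and hence
$\phi\in\inn_\Pi(\Delta)$. For exactness at $\out(\Pi)$: given $\psi\in\aut(\Pi)$
with $\bar{\psi}=\inn_G(g)$, I lift $g$ to some $\pi\in\Pi$; then
$\inn_\Pi(\pi)^{-1}\psi$ preserves $\Delta$ (which is characteristic) and induces
the identity on $G$, so it lies in $\aut_G(\Pi)$ and represents the same class
as $\psi$ in $\out(\Pi)$. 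The principal technical point, as I see it, is the
cocycle verification in the surjectivity step of (i): the $Z(\Delta)=\{1\}$
hypothesis is essential to upgrade an equality that a priori holds only modulo
$C_\Pi(\Delta)$ into a genuine identity in $\Pi$. In the profinite setting,
continuity of $\pi\mapsto\delta(\pi)$ follows from its uniqueness together with
continuity of $c$ and closedness of $\Delta$, so the resulting $\phi$ is
automatically a continuous automorphism of $\Pi$.
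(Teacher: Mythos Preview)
Your argument is correct. For part~(ii) it is essentially identical to the paper's proof (both verify $\aut_G(\Pi)\cap\inn(\Pi)=\inn_\Pi(\Delta)$ from $Z(G)=\{1\}$, use that $\Delta$ is characteristic to define $\varpi$, and obtain exactness from $\inn(\Pi)\twoheadrightarrow\inn(G)$); for part~(i) the paper simply invokes \cite[Corollary~1.5.7]{n}, whereas you have written out a self-contained proof---your cocycle construction $\phi(\pi)=\pi\,\delta(\pi)$ with uniqueness forced by $Z(\Delta)=\{1\}$ is exactly what that reference contains, so the approaches coincide once the citation is unwound.
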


\begin{proof}
Assertion (i) follows immediately from \cite[Corollary 1.5.7]{n}.
We consider (ii).
First, observing
$
\aut_G(\Pi) \ \cap \ \inn(\Pi) \ = \ \inn_\Pi(\Delta)
$
under the assumption $Z(G)=\{1\}$,
we obtain the monomorphism
\[
\mathj : \aut_G(\Pi)/\inn_\Pi(\Delta)
\hra \ \aut(\Pi)/\inn(\Pi) \ = \ \out(\Pi)
\]
from the natural injection $\aut_G(\Pi) \hra \aut(\Pi)$.
Next, since $\Delta$ is a characteristic subgroup of $\Pi$, 
there exists a natural homomorphism
$\varpi : \out(\Pi) \ \to \ \out(G)$
with $\varpi\circ\mathj=1$.
Then, immediately from the surjectivity
$\inn(\Pi)\epi \inn(G)$ follows
that $\im(\mathj) = \Ker(\varpi)$, which 
completes the proof of (ii).
\end{proof}

We now obtain Theorem B:

\begin{thm}
\label{gtmbnh} 
(i)
Let $n\ge4$ be an integer.
Then the composite
\[
\gt \ \to \ \out(\mbnh)
\]
of the natural homomorphisms $\gt \to \out(\bnh)\to \out(\mbnh)$ is an isomorphism.
\\
(ii) Let $n\ge 5$. Then, the natural homomorphism
\[
\gt \ \to \ \out(\widehat\Gamma_{0,[n]})
\]
induced from $\widehat{\Psi}_n : \widehat{B}_n  \epi \widehat\Gamma_{0,[n]}$ (\ref{Psi_n}) is an isomorphism.
%
\end{thm}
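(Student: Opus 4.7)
The plan is to apply Lemma \ref{dpg} in each part to the appropriate exact sequence from diagram $(*)$, reducing each $\out$-computation to a centralizer inside $\out(\mpnh)$ (respectively, $\out(\widehat\Gamma_{0,n})$), and to make that centralizer explicit via Theorem \ref{hmm}.

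For (i), I will apply Lemma \ref{dpg} to the profinite exact sequence
\begin{equation*}
1 \ \to \ \mpnh \ \to \ \mbnh \ \xrightarrow{\mpinh} \ \sn \ \to \ 1
\end{equation*}
from $(*)$. The hypotheses are in place: $\mpnh$ is characteristic in $\mbnh$ by Propositions \ref{char} and \ref{charaP4B4}, $Z(\sn)=\{1\}$ for $n\ge 4$, and $Z(\mpnh)=\{1\}$ by \eqref{various-centers}. Lemma \ref{dpg}(ii) then yields an exact sequence
\begin{equation*}
1 \ \to \ \aut_{\sn}(\mbnh)/\inn_{\mbnh}(\mpnh) \ \to \ \out(\mbnh) \ \to \ \out(\sn),
\end{equation*}
whose rightmost arrow is trivial by Proposition \ref{out}(i); hence the first arrow is an isomorphism. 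Combining with Lemma \ref{dpg}(i) identifies this group with $Z_{\out(\mpnh)}(\im(\rho))$, where $\rho:\sn\to\out(\mpnh)$ is the outer action by conjugation inside $\mbnh$. Using the natural isomorphism $\mpnh\isom\pgnoh$ from $\Phi_n$ and Theorem \ref{hmm}, I will rewrite $\out(\mpnh)\cong\gt\times\sno$, and argue from diagram $(*)$ that $\rho$ factors through $\iota_n:\sn\hookrightarrow\sno$ and lands in the second factor $\{1\}\times\sno$. A direct computation then gives $Z_{\sno}(\iota_n(\sn))=\{1\}$ for $n\ge 3$: any centralizing element must fix $n+1$ by transitivity of $\sn$ on $\{1,\dots,n\}$, and hence lies in $Z(\sn)=\{1\}$. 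Thus
\begin{equation*}
\out(\mbnh) \ \cong \ Z_{\gt\times\sno}(\iota_n(\sn)) \ = \ \gt\times\{1\} \ \cong \ \gt.
\end{equation*}
To match this with the natural composite $\gt\to\out(\bnh)\to\out(\mbnh)$, I will note that the standard GT-action preserves the characteristic subgroup $\mpnh$ and restricts there to the $\gt$-factor of the HMM decomposition of $\out(\pgnoh)$; since GT acts trivially on $\sn$, the image automatically lies in the centralizer, and the induced self-map of $\gt$ is the identity.

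Part (ii) is handled by exactly the same procedure applied to the profinite completion of (\ref{exactGn}):
\begin{equation*}
1 \ \to \ \widehat\Gamma_{0,n} \ \to \ \widehat\Gamma_{0,[n]} \ \xrightarrow{\widehat\gamma_n} \ \sn \ \to \ 1.
\end{equation*}
Proposition \ref{out}(ii) (for $n\ge 5$) provides both the characteristic inclusion and the triviality of $\out(\widehat\Gamma_{0,[n]})\to\out(\sn)$; centerlessness of $\widehat\Gamma_{0,n}\cong\widehat{\mathcal{P}}_{n-1}$ comes again from \eqref{various-centers}. Applying Theorem \ref{hmm} with $n-1$ in place of $n$ (which requires $n\ge 5$) gives $\out(\widehat\Gamma_{0,n})\cong\gt\times\sn$, and the outer $\sn$-action is the identity embedding into the second factor. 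Hence the relevant centralizer is again $\gt\times Z(\sn)\cong\gt$, and the identification with the map induced by $\widehat\Psi_n$ proceeds exactly as in (i).

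The main obstacle is the compatibility bookkeeping in both parts: verifying via diagram $(*)$ (and the construction of the HMM isomorphism) that the outer $\sn$-actions on $\mpnh$ and on $\widehat\Gamma_{0,n}$ correspond to the natural embeddings into the second factor of $\gt\times\sno$ (resp.\ $\gt\times\sn$), and that the natural GT-actions coming from $\bnh$ (resp.\ $\widehat B_n$ via $\widehat\Psi_n$) map to the $\gt$-factor under the same identification. Both points reduce to the well-documented functoriality of the GT-action on profinite mapping class groups invoked in the introduction.
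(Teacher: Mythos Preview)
Your proposal is correct and follows essentially the same route as the paper: apply Lemma \ref{dpg} to the relevant extension in $(*)$ (respectively, to the profinite completion of (\ref{exactGn})), use Proposition \ref{out} to kill the map to $\out(\sn)$, identify the resulting centralizer via Theorem \ref{hmm}, and compute $Z_{\sno}(\iota_n(\sn))=\{1\}$ (respectively, $Z(\sn)=\{1\}$). The paper likewise treats the final compatibility with the natural map $\gt\to\out(\mbnh)$ as a routine check, so your handling of that point is in line with theirs.
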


\begin{proof}
First, we note that $\sn$ and $\mpnh$ are center-free 
(\ref{various-centers}), 
and that $\mpnh$ is a characteristic subgroup of $\mbnh$ 
(Propositions \ref{char} and \ref{charaP4B4}).   
Consider the upper exact sequence
\[
\begin{CD}
1 @>>> \mpnh @>>> \mbnh @>>> \sn @>>> 1 \\
\end{CD}
\]
of $(*)$ in Definition \ref{moduli}, and write
$\varphi_n:\sn \to \out(\mpnh)$ for
the associated outer representation.
Let us apply Lemma \ref{dpg} to the above exact sequence.
By virtue of Proposition \ref{out} (i), the homomorphism
$\varpi: \out(\mbnh)\to \out(\sn)$ of Lemma \ref{dpg} (ii)
turns out trivial, so $\mathj$ in loc.\,cit. together with
Lemma \ref{dpg} (i) gives an isomorphism
\[
Z_{\out(\mpnh)}(\varphi_n(\sn)) \ \isom \ \out(\mbnh).
\]
Then observe that the natural isomorphism $\mpnh \isom \pgnoh$ 
in $(*)$
induces an isomorphism
\[
Z_{\out(\mpnh)}(\varphi_n(\sn)) \ \isom \ Z_{\out(\pgnoh)}(\chi_n(\sn)),
\]
where $\chi_n: \sn \to \out(\pgnoh)$ is as in (\ref{diag41}).
But since $\iota_n(\sn)$ has trivial centralizer in
$\frakS_{n+1}$, Theorem \ref{hmm} implies
\[
\gt \ \isom \ Z_{\out(\pgnoh)}(\chi_n(\sn)).
\]
It is easy to see that the composite of the above three displayed isomorphisms
coincides with $\gt \to \out(\mbnh)$ of the assertion.
This completes the proof of (i).

(ii) Let $n\ge 5$. 
After Proposition \ref{out} (ii), the argument goes in a 
similar (and simpler) way to the case (i) 
with applying Lemma \ref{dpg} to the profinite completion of (\ref{exactGn}):
$$
\begin{CD}
1 @>>> \widehat\Gamma_{0,n} @>>> \widehat\Gamma_{0,[n]} @>>> \sn @>>> 1.
\end{CD}
$$
We leave the rest of detail to the reader.
\end{proof}

Now, to prove Theorem A, let us follow an argument in \cite{dg} (Theorem 20)
to look closely at the short exact sequence
\begin{equation}\label{Wells0}
\begin{CD}
1 @>>> \cnh @>>> \bnh @>>> \mbnh @>>> 1
\end{CD}
\end{equation}
obtained as the profinite completion of 
(\ref{exactBn}).
Since $\cnh$ is characteristic in $\bnh$, 
this yields two natural homomorphisms 
\begin{equation}\label{WellsPair}
\fp_0:\aut(\bnh)\to\aut(\cnh),
\quad
\fp_1:\aut(\bnh)\to\aut(\mbnh).
\end{equation}
Recalling $\cnh=\la \zeta_n\ra\cong\zh$, 
we now {\it canonically} identify $\aut(\cnh)=\zh^\times$.

\begin{dfn}
For $n> 1$, define the subgroup $Z_n\subset \zhat^\times$ by
$$
Z_n :=
\bigl(1+n(n-1)\zhat\bigr)^\times=
\ker\left(\zhat^{\times} \epi (\zhat/n(n-1)\zhat)^{\times}\right).
$$
\end{dfn}
\noindent
It is clear that each $\nu \in Z_n$ has a unique element 
$e \in \zhat$ such that
\[
\nu=  1 + n(n-1)e.
\]
(But note that this form of $\nu$ is not always in $\zh^{\times}$ for arbitrary $e \in \zh$.)

The next key lemma enables us to identify $\ker(\fp_1)$ with $Z_n$:

\begin{lem} \label{lemma_phi_nu}
There is an isomorphism 
$$
\phi: Z_n \isom \ker(\fp_1)\subset \aut(\bnh)
$$ 
which assigns to every $\nu\in Z_n$ an automorphism $\phi_\nu\in \aut(\bnh)$
determined by 
$$
\phi_\nu(\sigma_i)=\sigma_i\zeta_n^e 
\qquad (\nu=  1 + n(n-1)e, \ i=1,\dots,n-1).
$$
\end{lem}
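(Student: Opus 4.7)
The plan is to analyze elements of $\ker(\fp_1)$ through their action on the standard generators $\sigma_1,\dots,\sigma_{n-1}$, and to match them bijectively to $Z_n$ via the induced action on the center $\cnh$.

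First I would establish surjectivity of $\phi$. Any $\psi \in \ker(\fp_1)$ induces the identity on $\mbnh = \bnh/\cnh$, so $\psi(\sigma_i) = \sigma_i \zeta_n^{e_i}$ for some $e_i \in \zh$. Applying $\psi$ to the braid relation $\sigma_i\sigma_{i+1}\sigma_i = \sigma_{i+1}\sigma_i\sigma_{i+1}$ and using the centrality of $\zeta_n$ together with the torsion-freeness of $\cnh \cong \zh$, one obtains $2e_i + e_{i+1} = e_i + 2e_{i+1}$, forcing all the $e_i$ to coincide with a common value $e \in \zh$. A direct computation then gives $\psi(\zeta_n) = \zeta_n^{1 + n(n-1)e}$; since $\cnh$ is characteristic in $\bnh$, the restriction $\psi|_{\cnh}$ is an automorphism of $\cnh$, so $\nu := 1 + n(n-1)e$ must lie in $\zh^\times$, i.e., $\nu \in Z_n$, and $\psi = \phi_\nu$ in the notation of the lemma.

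Conversely, given $\nu = 1 + n(n-1)e \in Z_n$, I would construct $\phi_\nu$ by first defining a map $B_n \to \bnh$ via $\sigma_i \mapsto \sigma_i \zeta_n^e$, which is well-defined because the centrality of $\zeta_n$ ensures the braid relations are preserved. The universal property of profinite completion then extends this uniquely to a continuous endomorphism $\phi_\nu$ of $\bnh$. The same computation as above shows $\phi_\nu|_{\cnh}$ is multiplication by $\nu \in \zh^\times$, while $\phi_\nu$ induces the identity on $\mbnh$; the five lemma applied to $1 \to \cnh \to \bnh \to \mbnh \to 1$ therefore yields $\phi_\nu \in \aut(\bnh)$, and clearly $\phi_\nu \in \ker(\fp_1)$.

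Finally, to see that $\phi$ is a group homomorphism, a direct computation using $\phi_\nu(\zeta_n) = \zeta_n^\nu$ yields $\phi_\nu \circ \phi_{\nu'}(\sigma_i) = \sigma_i \zeta_n^{e + \nu e'}$, which matches $\phi_{\nu\nu'}(\sigma_i)$ via the algebraic identity $\nu\nu' = 1 + n(n-1)(e + \nu e')$. Injectivity is immediate, since $\zeta_n^e = 1$ forces $e = 0$ by the torsion-freeness of $\cnh$. No step presents a serious obstacle, but the one meriting care is the extension step for general $e \in \zh$ (not just $e \in \zz$); this is handled cleanly by viewing $\sigma_i\zeta_n^e$ as an element of $\bnh$ itself and invoking the universal property of the profinite completion of $B_n$ applied to the target $\bnh$.
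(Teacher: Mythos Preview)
Your proof is correct and follows essentially the same route as the paper: analyze $\ker(\fp_1)$ via the action on the $\sigma_i$, use the braid relations to force all the $e_i$ to coincide, and compute the effect on $\zeta_n$ to identify the parameter $\nu$. The only cosmetic difference is that the paper verifies $\phi_\nu\in\aut(\bnh)$ by showing its image contains all $\sigma_i$ and invoking the hopfian property of $\bnh$, whereas you use the short five lemma on $1\to\cnh\to\bnh\to\mbnh\to 1$; both arguments are standard and equally valid here.
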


\begin{proof}
Given any $\nu \in Z_n$, let 
$e \in \zhat$ be the unique element with $\nu = 1 + n(n-1)e$. 
By using this $e \in \zh$, we define $\phi_{\nu} \in \Ker(\mathfrak{p}_1)$ as follows: First,
set $\phi_\nu(\sigma_i):=\sigma_i\zeta_n^e$ for  all $i=1,\dots,n-1$.
Since $\zeta_n$ lies in the center of $\bnh$, it is easy to see that
$\phi_\nu$ preserves the Artin's braid relations. Therefore, $\phi_\nu$
extends to an endomorphism of $\bnh$ written by the same symbol
$\phi_\nu$. One computes then
\begin{equation} \label{phinu_zeta_n}
\phi_\nu(\zeta_n)=
\phi_\nu((\sigma_1\cdots\sigma_{n-1})^n)
=(\sigma_1\cdots\sigma_{n-1})^n\cdot \zeta_n^{n(n-1)e}
=\zeta_n^{1+n(n-1)e}=\zeta_n^\nu.
\end{equation}
{}From this we see that the image of $\phi_\nu$ contains $\zeta_n=(\zeta_n^\nu)^{\nu^{-1}}$
and hence does contain $\sigma_i=\phi_\nu(\sigma_i)\cdot\zeta_n^{-e}$
for $i=1,\dots,n-1$. This means the endomorphism $\phi_\nu:\bnh \to \bnh$
is a surjective homomorphism. As $\bnh$ is hopfian, 
we conclude $\phi_{\nu}\in \aut(\bnh)$ (cf.\,\cite[Proposition 2.5.2]{rz}).
Write $\phi: Z_n \to \aut(\bnh)$ for the homomorphism defined by the 
above correspondence 
$\nu  \mapsto \ \phi_{\nu}$.
One verifies immediately that $\phi$ is injective 
and $Z_n \cong \mathrm{Im}(\phi) \subset \Ker(\mathfrak{p}_1)$.
To see $\mathrm{Im}(\phi) = \Ker(\mathfrak{p}_1)$, pick any $\alpha\in\Ker(\fp_1)$ and 
set $\nu:=\fp_0(\alpha)\in\zh^\times$.
%
Then, $\alpha(\zeta_n)=\zeta_n^\nu$ and there exist $e_i\in\zh$ ($i=1,\dots,n-1$)
such that $\alpha(\sigma_i)=\sigma_i\, \zeta_n^{e_i}$.
It is easy to see from the braid relation that all $e_i$ are the same
constant $e \in \zh$.
But then, since $\zeta_n=(\sigma_1\cdots\sigma_{n-1})^n$, we find
$\nu=1+n(n-1)e$ which belongs to $Z_n$ and that $\alpha = \phi_{\nu}$.
\end{proof}

Theorem A is obtained from Theorem \ref{gtmbnh} (i)
together with the last part of the following

\begin{thm} \label{ThA_detail}
Let $n\ge 4$ be an integer. 
\begin{enumerate}
\item[(i)] There exists an exact
sequence 
\begin{equation*}
\label{aut-result}
\begin{CD}
1 @>>> Z_n @>\phi>> \aut(\bnh) @>\fp_1>> \aut(\mbnh) @>>> 1.
\end{CD}
\end{equation*}

\item[(ii)] $\inn(\bnh)\cap \phi(Z_n)=\{1\}$.

\item[(iii)]
The exact sequence (i) provides a split central extension, i.e., 
$$ 
\aut(\bnh) \cong \aut(\mbnh) \times Z_n,
$$
and gives rise to $\out(\bnh) \cong \out(\mbnh) \times Z_n$.
\end{enumerate}
\end{thm}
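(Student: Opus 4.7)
The plan is to establish (i)--(iii) in sequence, with Theorem \ref{gtmbnh} as the principal input.

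For (i), Lemma \ref{lemma_phi_nu} already identifies $Z_n$ with $\ker(\fp_1)$, so only the surjectivity of $\fp_1$ remains. Given $\bar\alpha \in \aut(\mbnh)$, its outer class in $\out(\mbnh)$ corresponds via Theorem \ref{gtmbnh} (i) to some $\tau \in \gt$, whose image under the natural homomorphism $\gt \to \out(\bnh)$ is an outer class $[\alpha'] \in \out(\bnh)$ mapping to the class of $\bar\alpha$ under $\out(\bnh) \to \out(\mbnh)$. Choosing any representative $\alpha' \in \aut(\bnh)$, we have $\fp_1(\alpha') = \inn(\bar x) \circ \bar\alpha$ for some $\bar x \in \mbnh$; lifting $\bar x$ to $x \in \bnh$, the automorphism $\alpha := \inn(x)^{-1}\circ \alpha'$ satisfies $\fp_1(\alpha) = \bar\alpha$.

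For (ii), assume $\phi_\nu = \inn(x)$ for some $x \in \bnh$. Then $\id_{\mbnh} = \fp_1(\phi_\nu) = \inn(\bar x)$ forces $\bar x \in Z(\mbnh) = \{1\}$ by (\ref{various-centers}). Hence $x \in \cnh \subset Z(\bnh)$, so $\inn(x) = \id$ in $\aut(\bnh)$ and therefore $\nu = 1$. The centrality of $\phi(Z_n)$ in $\aut(\bnh)$, needed in (iii), follows from $\fp_0(\phi_\nu) = \nu$ (Lemma \ref{lemma_phi_nu}) together with the commutativity of $\aut(\cnh) = \zh^{\times}$: for any $\alpha \in \aut(\bnh)$ the conjugate $\alpha\phi_\nu\alpha^{-1}$ lies in the normal subgroup $\ker(\fp_1) = \phi(Z_n)$ and satisfies $\fp_0(\alpha\phi_\nu\alpha^{-1}) = \fp_0(\phi_\nu) = \nu$; since $\fp_0$ restricts to an injection on $\phi(Z_n)$, the conjugate must equal $\phi_\nu$.

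Finally, passing the sequence of (i) to outer automorphism groups --- using (ii) together with the isomorphism $\inn(\bnh) \cong \inn(\mbnh)$ that results from $Z(\bnh) = \cnh$ and $Z(\mbnh) = \{1\}$ --- yields the exact sequence
\[
1 \to Z_n \to \out(\bnh) \to \out(\mbnh) \to 1.
\]
The natural homomorphism $\gt \to \out(\bnh)$ splits this sequence, because by Theorem \ref{gtmbnh} its composition with $\out(\bnh) \to \out(\mbnh)$ is the given isomorphism $\gt \isom \out(\mbnh)$. Combined with the centrality of $Z_n$ in $\out(\bnh)$ (inherited from $\aut(\bnh)$), this gives $\out(\bnh) \cong \out(\mbnh) \times Z_n$. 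Composing $\aut(\bnh) \to \out(\bnh)$ with the resulting projection to $Z_n$ produces a retraction of $\phi: Z_n \to \aut(\bnh)$, so the sequence of (i) splits as a direct product $\aut(\bnh) \cong \aut(\mbnh) \times Z_n$. The main subtlety lies in checking that the natural map $\gt \to \out(\bnh)$ really does provide a section modulo the identification of Theorem \ref{gtmbnh} --- that is, that the various natural $\gt$-actions on $\bnh$ and $\mbnh$ are compatible at the level of outer automorphisms; this is built into the construction of the isomorphism in Theorem \ref{gtmbnh} but merits explicit verification.
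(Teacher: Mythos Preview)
Your argument is correct and follows the paper's overall strategy. The proof of (i) is essentially the paper's, just phrased elementwise rather than via the subgroup $\iota_n(\gt)\cdot\inn(\bnh)$. Your proof of (ii) unpacks exactly the content of Lemma~\ref{dpg}\,(ii) applied to (\ref{Wells0}), which is the paper's first argument; the paper also offers a second proof via the abelianization $\bnh^{\ab}\cong\zh$.

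The one genuine difference is in (iii). For centrality of $\phi(Z_n)$, the paper's proof computes directly with the explicit $\gt$-action (\ref{GTaction}), checking $(\lambda,f)\circ\phi_\nu=\phi_\nu\circ(\lambda,f)$ on generators. Your argument --- that $\fp_0$ is injective on $\phi(Z_n)$ and lands in the abelian group $\zh^\times$, so conjugation must act trivially --- is cleaner and avoids the formula entirely; in fact the paper itself notes this alternative in the paragraph following the proof, via the reduced Wells sequence (\ref{reduced-wells}). For the splitting, the paper works at the $\aut$ level first, exhibiting $\iota_n(\gt)\cdot\inn(\bnh)$ as an explicit complement to $\phi(Z_n)$, and then deduces the $\out$ statement; you reverse the order, splitting at the $\out$ level using the section furnished by $\gt\to\out(\bnh)$ and then pulling back a retraction to $\aut(\bnh)$. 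Both routes are short once centrality is known. Your closing caveat about compatibility of the $\gt$-actions is addressed at the end of the proof of Theorem~\ref{gtmbnh}\,(i), where the composite of the three displayed isomorphisms is identified with the natural map $\gt\to\out(\mbnh)$.
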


\begin{proof}
(i) It suffices to show $\fp_1$ is surjective. Note that
$\inn(\bnh)$ is mapped onto $\inn(\mbnh)$.
On the other hand, there is a well-known action 
$\iota_n:\gt \to \aut(\bnh)$ in the form
\begin{equation}
\label{GTaction}
\begin{cases}
\sigma_1 & \mapsto \ \sigma_1^\lambda, \\
\sigma_i & \mapsto \ f(\sigma_i,\zeta_i)\sigma_i^\lambda f(\zeta_i,\sigma_i) 
\quad (i=1,\dots, n-1)
\end{cases}
\end{equation}
with $(\lambda,f)\in \zh^\times \times [\widehat F_2, \widehat F_2]$ 
the standard parameter for the elements 
of $\gt$ (\cite{Dr}, \cite{I90}, \cite{im}).
Let $\bar\iota_n: \gt\to \aut(\mbnh)$ be the induced action.
By virtue of Theorem \ref{gtmbnh} (i), $\gt\cong \out(\mbnh)$,
hence $\aut(\mbnh)=\bar\iota_n(\gt)\cdot \inn(\mbnh)$.
{}From this follows that $\fp_1$ maps 
$\iota_n(\gt)\cdot \inn(\bnh) (\subset \aut(\bnh))$
onto $\aut(\mbnh)$.

(ii) This is a consequence of Lemma \ref{dpg} (ii) 
applied to (\ref{Wells0}). 
Here is an alternative direct proof: 
Recall that the abelianization
$\bnh^\ab$ of $\bnh$ is isomorphic to $\zh$.
Each inner automorphism acts trivially on $\bnh^\ab$, 
while $\phi_\nu\in \phi(Z_n)$ ($\nu\in Z_n$) acts on it by 
$$
(\sigma_i)^\ab
\mapsto (\sigma_i\cdot \zeta_n^e)^\ab=(\sigma_i^\ab)^{1+n(n-1)e}
\qquad(i=1,\dots,n-1)
$$ 
which is nontrivial unless $e=0$.
This concludes the assertion.

(iii) It follows from (ii) that $\fp_1$ induces 
$\inn(\bnh)\isom \inn(\mbnh)$.
Since $\iota_n(\gt)\isom \bar\iota_n(\gt)$,
we find from Theorem \ref{gtmbnh} (i)
that  $\fp_1$ restricts to 
the isomorphism
\begin{equation} 
\label{compl_factor}
\iota_n(\gt)\cdot\inn(\bnh)\isom
\aut(\mbnh) ,
\end{equation}
i.e.,  $\iota_n(\gt)\cdot\inn(\bnh)$ gives 
a complementary factor of $\phi(Z_n)$ 
in $\aut(\bnh)$.
To see that the exact sequence (i) gives a central extension,
it suffices to show that both $\inn(\bnh)$ and $\iota_n(\gt)$
commutes with $\phi(Z_n)$.
The commutativity of $\inn(\bnh)$ and $\phi(Z_n)$ 
follows immediately from the definition of $\phi_\nu$
($\nu\in Z_n)$
in Lemma \ref{lemma_phi_nu}.
The commutativity of $\iota_n(\gt)$ and $\phi(Z_n)$
also follows from direct computation by using
the above description 
of the $\gt$-action on $\bnh$. 
Indeed, given $(\lambda,f)\in\gt$, 
noting that $\zeta_n$ lies in the center of $\widehat{B}_n$,
and $f$ lies in the commutator subgroup of $\widehat F_2$, we have
$f(\sigma_i,\zeta_i)=f(\sigma_i\zeta_n^e,\zeta_i)$
($i=1,\dots, n-1$). Since $(\lambda,f)\in\gt$ is known to
act on $\zeta_n$ by $\zeta_n\mapsto \zeta_n^\lambda$ under
the action (\ref{GTaction}), one computes:
\begin{align*}
(\lambda,f)\circ\phi_\nu(\sigma_i) &=(\lambda,f)(\sigma_i\zeta_n^e)
=f(\sigma_i,\zeta_i)\sigma_i^\lambda f(\zeta_i,\sigma_i)
\zeta_n^{\lambda e},\\
&=f(\sigma_i,\zeta_i)(\sigma_i \zeta^e)^\lambda f(\zeta_i,\sigma_i)
=\phi_\nu(f(\sigma_i,\zeta_n)\sigma_i^\lambda f(\zeta_i,\sigma_i)) \\
&=\phi_\nu\circ(\lambda,f)(\sigma_i).
\end{align*}
for every $i=1,\dots, n-1$ (we understand $\zeta_1=1$ when $i=1$).
Thus we settle the first assertion 
$\aut(\bnh) \cong \aut(\mbnh) \times Z_n$ 
after identifying $Z_n\cong \phi(Z_n)\subset \aut(\bnh)$
and $\aut(\mbnh)\cong \iota_n(\gt)\cdot\inn(\bnh)\subset \aut(\bnh)$
via (\ref{compl_factor}).
The second assertion is then just a consequence of it. 
\end{proof}
\medskip
In our above discussion for the proof of Theorem A, 
important roles have been played by
the pair of two maps (\ref{WellsPair}),
which was motivated from
the profinite Wells exact sequence 
(cf.\,\cite[\S 1.5]{n}, \cite{JL})
associated to the short exact sequence (\ref{Wells0})
in the form:
\begin{equation}
\label{wells}
\begin{CD}
0 \longrightarrow 
\coz^1(\mbnh, \cnh) \longrightarrow 
\aut( \bnh, \cnh) 
@>\fp>> \mathscr{C} @>q>> \co^2(\mbnh, \cnh).
\end{CD}
\end{equation}
Since $\bnh$ in (\ref{Wells0}) is a central extension and
$\cB_n^\ab\cong\Z/n(n-1)\Z$,
we easily see that $\coz^1(\mbnh, \cnh) = \{0\}$, 
$\aut(\bnh, \cnh) = \aut(\bnh)$, and find
the group of ``compatible pairs'' 
$\mathscr{C}$ to be $\aut(\mbnh) \times \aut(\cnh).$
\noindent
Thus, 
the exact sequence (\ref{wells}) is reduced to 
\begin{equation}
\label{reduced-wells}
\begin{CD}
0 @>>> \aut(\bnh) @>\fp=(\fp_1,\fp_0)>> \aut(\mbnh) \times \aut(\cnh) @>q>> \co^2(\mbnh, \cnh),
\end{CD}
\end{equation}
where $q$ is called the Wells pointed map 
(generally not a homomorphism).

The above sequence (\ref{reduced-wells}) is simply useful, for example, to see that 
{\it the exact sequence of} Theorem \ref{ThA_detail} (i) 
{\it provides a central extension},
reproving the core part of Theorem \ref{ThA_detail} (iii) 
without use of the explicit $\gt$-action (\ref{GTaction}):
Indeed, according to (\ref{phinu_zeta_n}),
the image $\fp(\phi_\nu)=(\fp_1(\phi_\nu),\fp_0(\phi_\nu))=(id, \nu)$
for every $\nu\in Z_n$ is easily seen to lie
in the center of $\aut(\mbnh) \times \aut(\cnh)$.
Besides this simple observation, it is a natural question to measure 
the size of the image of  $\aut(\bnh)$ by the injection $\fp=(\fp_1,\fp_0)$
into $\aut(\mbnh) \times \aut(\cnh)$.
Now, recalling 
$\gt\subset\{(\lambda,f)\in\zhat^\times \times \widehat F_2\}$,
$\aut(\mbnh) = \gt \cdot \inn(\cB_n)$ and
$\aut(\cnh) = \zh^\times$, 
we define two characters
\begin{equation}
\lambda:\aut(\mbnh)\to \zh^\times
\quad\text{and}\quad
\nu:\aut(\cnh)\to \zh^\times
\end{equation}
in the obvious way.
One finds:
\begin{prop} Notations being as above, we have
$$
\im(\fp)=\{(\alpha,\beta)\in \mathscr{C}\mid
\lambda(\alpha)\equiv \nu(\beta) \mod n(n-1)
\}.
$$
In particular, $\mathscr{C}/\im(\fp)\cong (\Z/n(n-1)\Z)^\times$.
\end{prop}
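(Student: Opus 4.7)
The plan is to exploit the decomposition
$\aut(\bnh)=\iota_n(\gt)\cdot\inn(\bnh)\cdot\phi(Z_n)$
established in Theorem \ref{ThA_detail} and compute the pair
$(\lambda\circ\fp_1,\nu\circ\fp_0)$ on each of the three generating
subgroups separately, then extend by multiplicativity. Recall that
$\lambda$ is the composite $\aut(\mbnh)\epi\out(\mbnh)\cong\gt\to\zh^\times$
(the cyclotomic character) and $\nu$ is the tautological identification
$\aut(\cnh)=\zh^\times$, so that the proposed equality
$\lambda(\alpha)\equiv\nu(\beta)\bmod n(n-1)$ is a condition in $(\Z/n(n-1)\Z)^\times$.

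For the inclusion $\im(\fp)\subset\{(\alpha,\beta)\mid\lambda(\alpha)\equiv\nu(\beta)\bmod n(n-1)\}$,
it suffices to check the congruence on each of the three generating subgroups.
If $\psi=\iota_n(\lambda_0,f)\in\iota_n(\gt)$, then $\fp_1(\psi)=\bar\iota_n(\lambda_0,f)$
projects to $(\lambda_0,f)\in\gt$ under $\aut(\mbnh)\epi\out(\mbnh)$, so
$\lambda(\fp_1(\psi))=\lambda_0$; while the classical identity
$\iota_n(\lambda_0,f)(\zeta_n)=\zeta_n^{\lambda_0}$ coming from
(\ref{GTaction}) gives $\nu(\fp_0(\psi))=\lambda_0$, so the two sides are literally equal.
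If $\psi\in\inn(\bnh)$, both sides equal $1$ since conjugation fixes $\cnh$
pointwise and $\inn(\mbnh)$ dies in $\out(\mbnh)$.
If $\psi=\phi_\mu$ with $\mu\in Z_n$, then $\fp_1(\phi_\mu)=\id$ by definition of
$\phi(Z_n)=\ker(\fp_1)$ (Lemma \ref{lemma_phi_nu}), and $\fp_0(\phi_\mu)=\mu$ by
(\ref{phinu_zeta_n}); hence $\lambda(\fp_1(\phi_\mu))=1$ and
$\nu(\fp_0(\phi_\mu))=\mu\equiv 1\bmod n(n-1)$.

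For the reverse inclusion, given $(\alpha,\beta)\in\mathscr{C}$ with
$\lambda(\alpha)\equiv\nu(\beta)\bmod n(n-1)$, I would write
$\alpha=\bar\iota_n(\lambda_0,f)\cdot\inn(\bar h)$
using $\aut(\mbnh)=\bar\iota_n(\gt)\cdot\inn(\mbnh)$, with $\lambda_0:=\lambda(\alpha)$.
For any lift $h\in\bnh$ of $\bar h\in\mbnh$, set
$\tilde\alpha:=\iota_n(\lambda_0,f)\cdot\inn(h)\in\aut(\bnh)$; by the calculation above,
$\fp_1(\tilde\alpha)=\alpha$ and $\fp_0(\tilde\alpha)=\lambda_0$. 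The hypothesis then
forces $\mu:=\nu(\beta)\cdot\lambda_0^{-1}\in Z_n$, and the element
$\tilde\alpha\cdot\phi(\mu)$ has $\fp$-image $(\alpha,\beta)$. For the final assertion,
consider the homomorphism
$\tau:\mathscr{C}\to(\Z/n(n-1)\Z)^\times$,
$(\alpha,\beta)\mapsto\lambda(\alpha)\nu(\beta)^{-1}\bmod n(n-1)$,
which is surjective already on the factor $\{\id\}\times\aut(\cnh)=\zh^\times$
and whose kernel is exactly $\im(\fp)$ by the congruence just proved;
hence $\mathscr{C}/\im(\fp)\isom(\Z/n(n-1)\Z)^\times$.

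The only non-formal ingredient is the identity $\fp_0(\iota_n(\lambda_0,f))=\lambda_0$,
which reflects the well-known fact that $\gt$ acts on the center generator $\zeta_n$
via the cyclotomic character (as used repeatedly in \S 4); everything else is a
straightforward bookkeeping exercise on top of Theorem \ref{ThA_detail} and
Lemma \ref{lemma_phi_nu}, so I do not anticipate a serious obstacle.
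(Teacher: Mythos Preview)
Your proof is correct and rests on the same key ingredient as the paper's---namely the identity $\iota_n(\lambda_0,f)(\zeta_n)=\zeta_n^{\lambda_0}$---but the bookkeeping is organized differently. The paper introduces the subgroup $A_n:=\lambda^{-1}(Z_n)\subset\aut(\mbnh)$, checks $A_n\times Z_n\subset\im(\fp)$, and then observes that the image of $\im(\fp)$ in the quotient $\mathscr{C}/(A_n\times Z_n)\cong(\Z/n(n-1)\Z)^\times\times(\Z/n(n-1)\Z)^\times$ is exactly the diagonal. Your approach instead exploits the product decomposition $\aut(\bnh)=\iota_n(\gt)\cdot\inn(\bnh)\cdot\phi(Z_n)$ from Theorem~\ref{ThA_detail} and verifies the congruence factor by factor, then builds an explicit preimage for the reverse inclusion. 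The paper's route is terser, while yours is more constructive and makes the role of each piece of $\aut(\bnh)$ transparent; both are equally valid and short.
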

\begin{proof}
Let $Z_n\subset\aut(\cnh)=\zh^\times$ be as above,
and define $A_n\subset\aut(\cB_n)$ to be $\lambda^{-1}(Z_n)$.
It is not difficult to see $A_n\times Z_n\subset\im(\fp)$.
The assertion is derived from the observation 
that the image of $\im(\fp)$ in the quotient
group $\mathscr{C}/(A_n\times Z_n)\cong
 (\Z/n(n-1)\Z)^\times\times(\Z/n(n-1)\Z)^\times$
forms the diagonal subgroup.
This follows from the well-known fact that the restriction
of the action of $(\lambda,f)\in\gt$ on $\bnh$
to $\cnh = \la\zeta_n\ra$ is given by
$\zeta_n\mapsto \zeta_n^\lambda$, which completes
the proof.
\end{proof}

Before closing the paper, let us add some remark on the Wells map
$q:\mathscr{C} \to \co^2(\mbnh, \cnh)$.
Let $[\mu] \in \co^2(\mbnh, \cnh)$ be
the class of factor sets associated to the central
extension (\ref{Wells0}). 
For each pair $(\alpha,\nu)\in\mathscr{C}=
\aut(\mbnh) \times \aut(\cnh)$, we denote 
by $[\mu]^{(\alpha,\nu)}\in \co^2(\mbnh, \cnh)$
the class of a central extension obtained by
twisting (\ref{Wells0}) by $(\alpha, \nu)$.
Then, one finds: 
\begin{equation}
q(\alpha,\nu)=[\mu]-[\mu]^{(\alpha,\nu)}.
\end{equation}
This means that $\im(\fp)\subset\mathscr{C}$
can be characterized as the stabilizer of the
twisting action of $\mathscr{C}$ on $[\mu]$.
Concerning the precise position and size of 
$[\mu] \in \co^2(\mbnh, \cnh)$, we remark
the following

\begin{prop}
Let $n\ge 4$. 
The cohomology group $\co^2(\mbnh, \cnh) $
is isomorphic to $\Z/n(n-1)\Z$, and is generated by
the class $[\mu]$.
\end{prop}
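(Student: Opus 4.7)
The plan is to apply the Lyndon--Hochschild--Serre spectral sequence to the central extension (\ref{Wells0}):
\[
1 \to \cnh \to \bnh \to \mbnh \to 1.
\]
Since $\cnh \cong \zh$ is acted on trivially and $\co^q(\cnh, \zh) = \zh$ for $q = 0, 1$ and vanishes for $q \ge 2$, only two rows of $E_2^{p,q} = \co^p(\mbnh, \co^q(\cnh, \zh))$ survive, each equal to $\co^p(\mbnh, \zh)$, with $d_2 : E_2^{p,1} \to E_2^{p+2,0}$ given by cup product with $[\mu]$.

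First I would read off the associated 5-term inflation--restriction sequence
\[
0 \to \co^1(\mbnh, \zh) \to \co^1(\bnh, \zh) \xrightarrow{\mathrm{res}} \co^1(\cnh, \zh) \xrightarrow{\tau} \co^2(\mbnh, \zh) \xrightarrow{\mathrm{inf}} \co^2(\bnh, \zh).
\]
Since $\mbnh^{\ab} \cong \Z/n(n-1)\Z$ is finite, $\co^1(\mbnh, \zh) = 0$; and since $\zeta_n = (\sigma_1 \cdots \sigma_{n-1})^n$ maps to $n(n-1)$ under the abelianization $\bnh \twoheadrightarrow \bnh^{\ab} = \zh$, the restriction map $\co^1(\bnh, \zh) = \zh \to \co^1(\cnh, \zh) = \zh$ is multiplication by $n(n-1)$. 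Its cokernel $\Z/n(n-1)\Z$ then embeds via $\tau$ into $\co^2(\mbnh, \cnh)$, and by the standard identification of the transgression with the extension class, $\tau(1) = \pm[\mu]$. This yields the cyclic subgroup $\Z/n(n-1)\Z \cdot [\mu] \hookrightarrow \co^2(\mbnh, \cnh)$.

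For the reverse inclusion, note that $E_\infty^{1,1} = 0$ (because $\co^1(\mbnh, \zh) = 0$) and $E_\infty^{0,2} = 0$, so the spectral sequence gives an isomorphism
\[
\co^2(\bnh, \zh) \ \cong \ \co^2(\mbnh, \zh)/[\mu]\,\zh.
\]
Equivalently, exactness of the 5-term sequence identifies $\Z/n(n-1)\Z \cdot [\mu]$ with the kernel of $\mathrm{inf}$. Either way, it suffices to verify $\co^2(\bnh, \cnh) = 0$.

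The main obstacle is this final vanishing. I would derive it from two classical ingredients. First, the Schur multiplier $H_2(B_n, \Z) = 0$ for every $n \ge 2$ (a result of Gorin--Lin, also provable directly by Hopf's formula applied to the Artin presentation, or by reading off Arnold's computation of $H^*(B_n, \Z)$); combined with $B_n^{\ab} = \Z$ and the universal coefficient theorem, this yields $H^2(B_n, \Z/m\Z) = 0$ for every positive integer $m$. Second, $B_n$ is \emph{good} in Serre's sense: this follows from the goodness of the pure braid group $\pn$ (an iterated semidirect product of finitely generated free groups by Fadell--Neuwirth, goodness being preserved under such extensions) together with the fact that goodness is inherited by finite-index overgroups, applied to the quotient $\bn/\pn = \sn$. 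Goodness then gives $\co^2(\bnh, \Z/m\Z) \cong H^2(B_n, \Z/m\Z) = 0$ for all $m$, and passing to the inverse limit yields $\co^2(\bnh, \zh) = \varprojlim_m \co^2(\bnh, \Z/m\Z) = 0$, completing the proof.
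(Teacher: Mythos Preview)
Your overall architecture --- the five-term sequence for the central extension $1\to\cnh\to\bnh\to\mbnh\to 1$, identification of the transgression of $\mathrm{id}\in\co^1(\cnh,\cnh)$ with $[\mu]$, and reduction to the vanishing of $\co^2(\bnh,\cnh)$ --- is exactly the route the paper takes. The gap is in your final step.

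Your claim that the Schur multiplier $H_2(B_n,\Z)=0$ for all $n\ge 2$ is \emph{false} for $n\ge 4$. Arnold's computation (cited in the paper) gives $H^2(B_n,\Z)=0$ and $H^3(B_n,\Z)=\Z/2\Z$; by the universal coefficient theorem the torsion of $H^3$ is $\mathrm{Ext}^1(H_2(B_n,\Z),\Z)\cong H_2(B_n,\Z)$, so in fact $H_2(B_n,\Z)\cong\Z/2\Z$ for $n\ge 4$. (Hopf's formula applied to the Artin presentation does \emph{not} give zero here, and Gorin--Lin do not assert this.) Consequently $H^2(B_n,\Z/m\Z)\cong\Z/2\Z$ whenever $m$ is even, so your deduction ``$H^2(B_n,\Z/m\Z)=0$ for all $m$'' fails, and with it the last paragraph of your argument.

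The good news is that the conclusion $\co^2(\bnh,\zh)=\varprojlim_m H^2(B_n,\Z/m\Z)=0$ is nonetheless true, for a subtler reason. The paper observes that in the Bockstein sequence for $0\to\Z/2\Z\to\Z/2N\Z\to\Z/N\Z\to 0$, surjectivity of $H^1(B_n,\Z/2N\Z)\to H^1(B_n,\Z/N\Z)$ (immediate from $B_n^{\ab}\cong\Z$) forces $H^2(B_n,\Z/2\Z)\isom H^2(B_n,\Z/2N\Z)$, hence the transition map $H^2(B_n,\Z/2N\Z)\to H^2(B_n,\Z/N\Z)$ is the zero map. So the inverse system has eventually zero transition maps and its limit vanishes, even though the individual terms do not. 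Plugging this into your five-term/spectral-sequence framework completes the proof; the rest of what you wrote (including the goodness argument) is fine.
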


\begin{proof}
According to V.Arnold \cite{A68}, $H^2(B_n,\Z)=\{0\}$ and 
$H^3(B_n,\Z)=\Z/2\Z$. 
Applying this to the long exact sequence 
associated with $0\to\Z\to\Z\to\Z/r\Z\to 0$
($r\in \N$),
we obtain $H^2(B_n, \Z/r\Z)\cong \{0\}$, 
$\cong\Z/2\Z$ according to whether 
$r$ is odd or even respectively.
For a positive integer $N$, (part of) the five term exact sequence
for the central extension $1\to C_n\to B_n\to \mathcal{B}_n\to 1$
reads
\begin{align} \label{5term0}
&H^1(B_n,\Z/N\Z)
\overset{\mathrm{res}_N}{\longrightarrow} 
H^1(C_n,\Z/N\Z)
\overset{\mathrm{tg}_N}{\longrightarrow} 
H^2(\mathcal{B}_n,\Z/N\Z) 
\\
&\overset{\mathrm{inf}_N}{\longrightarrow} 
H^2(B_n, \Z/N\Z),
\notag
\end{align}
where $\mathrm{res}_N$, $\mathrm{tg}_N$ and $\mathrm{inf}_N$
are respectively the restriction, transgression and inflation
maps.
Suppose first $N$ is a positive integer divisible by $n(n-1)$.
Then, (\ref{5term0}) yields the exact sequence
\begin{equation} \label{5term}
\begin{CD}
0 \to 
\Z/n(n-1)\Z 
@>{{\overline{\mathrm{tg}}_N}}>> 
H^2(\mathcal{B}_n,\Z/N\Z) 
@>{\mathrm{inf}_N}>>
H^2(B_n, \Z/N\Z) \ (\cong \Z/2\Z),
\end{CD}
\end{equation}
where $\Z/n(n-1)\Z$ is regarded as
the cokernel of the restriction
$\mathrm{res}_N: \mathrm{Hom}(B_n,\Z/N\Z)\to \mathrm{Hom}(C_n,\Z/N\Z)$
followed by the factorization ${\overline{\mathrm{tg}}_N}$
of transgression ${\mathrm{tg}}_N$.
Let us vary $N$ multiplicatively.
The goodness of $\mathcal{B}_n$ (in the sense of Serre) 
together with \cite[Corollary 2.7.6]{NSW} allows us to interpret
$\co^2(\mbnh, \cnh)
=\varprojlim_N H^2(\mathcal{B}_n,\Z/N\Z)$
after identification $C_n=\zeta_n^\Z\cong \Z$ with trivial
(conjugate) action of $\mathcal{B}_n$.
The term $\mathrm{coker}(\mathrm{res}_N)\cong \Z/n(n-1)\Z$ 
in (\ref{5term}) is constant in the projective system 
along $N\in \N$ divisible by $n(n-1)$.  
On the other hand, we have (\#):
$\varprojlim_N H^2(B_n,\Z/N\Z)=\{0\}.$
In fact,
since $B_n^\ab\cong\Z$,
in the long exact sequence
associated with $0\to \Z/2\Z\to \Z/2N\Z\to \Z/N\Z\to 0$,
we find that 
$H^1(B_n,\Z/2N\Z)\to H^1(B_n,\Z/N\Z)$ is surjective,
hence that the former arrow in
$H^2(B_n,\Z/2\Z)\to H^2(B_n,\Z/2N\Z)\to H^2(B_n,\Z/N\Z)$ 
gives an isomorphism between groups of order two
so that the latter arrow is 0-map.
This settles (\#) which concludes the first assertion
$\co^2(\mbnh, \cnh) \cong\Z/n(n-1)\Z$.

It remains to show that the class $[\mu]$ has order $n(n-1)$ in
$\co^2(\mbnh, \cnh) $.
For an integer $d>0$, let $[\mu_d]\in H^2(\mathcal{B}_n,\Z/d\Z)$
be the class of factor sets corresponding to the central extension
\begin{equation} \label{extension_d}
1\to C_n/C_n^d(\cong \Z/d\Z)\to B_n/C_n^d\to \mathcal{B}_n\to 1.
\end{equation}
It is known that $[\mu_d]$ is the transgression image of the
projection $\mathrm{pr}_d:C_n\to C_n/C_n^d$ regarded as an element of
$H^1(C_n,C_n/C_n^d)$, i.e., 
\begin{equation} \label{suzuki-id}
[\mu_d]=\mathrm{tg}_d(\mathrm{pr}_d)\in
\im(\mathrm{tg}_d)\subset  H^2(\mathcal{B}_n,\Z/d\Z),
\end{equation}
where $C_n/C_n^d\isom \Z/d\Z$ is given by
$\zeta_n\mapsto 1$
(cf.\,e.g.,\,\cite[Chap.\,2 \S 9\,(9.4)]{Sz}).
Let us observe that the extension (\ref{extension_d})
splits 
if and only if $n(n-1)\in (\Z/d\Z)^\times$.
In fact, a system of lifts of the generators 
$\bar\sigma_i\in \mathcal{B}_n$
($i=1,\dots,n-1$)
can be written in the form of images of
$\sigma_i \zeta_i^{a_i} \in B_n$ in $B_n/C_n^d$
($a_i\in\Z$). It is easy to see that they
satisfy the braid relations modulo $C_n^d$
if and only if $a_1\equiv \cdots\equiv a_{n-1}$
and $1+n \sum_i a_i\equiv 0$ in $\Z/d\Z$
(cf.\,(\ref{phinu_zeta_n})).
This condition to be held by a collection $\{a_i\}_i$
is equivalent to 
$n(n-1)\in (\Z/d\Z)^\times$ as desired.
Let $p$ be a prime dividing $n(n-1)$ and consider 
$[\mu_p]\in H^2(\mathcal{B}_n,\Z/p\Z)$.
It follows from the above observation that $[\mu_p]\ne 0$.
Since the restriction map
$\mathrm{res}_p: H^1(B_n,\Z/p\Z)
\to H^1(C_n,\Z/p\Z)$ is trivial
under the assumption $p \mid n(n-1)$,
the transgression $\mathrm{tg_p}$ injects
$H^1(C_n,\Z/p\Z)\cong\Z/p\Z$
into $H^2(\mathcal{B}_n,\Z/p\Z)$
whose image is generated by $[\mu_p]\ne 0$.
But for any multiple $N$ of $n(n-1)$, 
the class $[\mu_N]\in \im(\mathrm{tg}_N) (\cong \Z/n(n-1)\Z)
\subset H^2(\mathcal{B}_n,C_n/C_n^N)$
is mapped to $[\mu_p]\in H^2(\mathcal{B}_n,C_n/C_n^p)$
via the reduction of central extensions induced from
the surjective homomorphism $B_n/C_n^N\epi B_n/C_n^p$
in virtue of (\ref{suzuki-id}).
In particular, the reduction map  
$\im(\mathrm{tg}_N)\to \im(\mathrm{tg}_p)$
is given simply by the mod $p$ surjection between 
the cyclic groups:
\begin{equation}
\label{diag-suzuki}
\begin{matrix}
\quad [\mu_N]& \in\im(\mathrm{tg}_N) & (\cong\Z/n(n-1)\Z) 
&\subset H^2(\mathcal{B}_n,C_n/C_n^N) &\qquad
\\
\quad\downarrow & & \qquad\downarrow \mathrm{ mod}\ p \quad &
\downarrow &
\\
0\ne [\mu_p] &\in\im(\mathrm{tg}_p) & (\cong\Z/p\Z) \quad
&\subset H^2(\mathcal{B}_n,C_n/C_n^p). & 
\end{matrix}
\end{equation}
Since the class $[\mu]\in \co^2(\mbnh, \cnh)$ is
the common limit of those $[\mu_N]$,
it follows that $[\mu]$ generates the $p$-primary
component of the cyclic group $\co^2(\mbnh, \cnh) 
\cong \Z/n(n-1)\Z$ for every prime $p\mid n(n-1)$, hence
gives a generator of it.
\end{proof}


\end{document}